\renewcommand{\phi}{\varphi}
\newcommand{\R}{{\mathbb{R}}}
\renewcommand{\P}{{\mathbb{P}}}
\newtheorem{Theorem}{Theorem}[section]
\newtheorem{Lemma}[Theorem]{Lemma}
\newtheorem{Corollary}[Theorem]{Corollary}
\newtheorem{Proposition}[Theorem]{Proposition}
\newtheorem{Definition}[Theorem]{Definition}
\newtheorem{Remark}[Theorem]{Remark}
\def\R{{\mathbb R}}
\def \0{\lambda_{0}}
\newcommand{\w}{\wedge}
\begin{document}
\title[Contacting the Moon]{The contact geometry of the restricted 3-body problem}


\author[P. Albers]{Peter Albers}
 \address{Department of Mathematics, Purdue University}
 \email {palbers@math.purdue.edu}

\author[U. Frauenfelder]{Urs Frauenfelder}
 \address{Department of Mathematics and Research Institute of Mathematics,
  Seoul National University}
 \email {frauenf@snu.ac.kr}

\author[O. van Koert]{Otto van Koert}
\address{Department of Mathematics and Research Institute of Mathematics,
  Seoul National University}
 \email {okoert@snu.ac.kr}

\author[G.P. Paternain]{Gabriel P. Paternain}
 \address{ Department of Pure Mathematics and Mathematical Statistics,
University of Cambridge,
Cambridge CB3 0WB, UK}
 \email {g.p.paternain@dpmms.cam.ac.uk}



\begin{abstract}
We show that the planar circular restricted three body problem
is of restricted contact type for all energies below the first critical value (action of the
 first Lagrange point) and for energies slightly above it. This opens up the
possibility of using the technology of Contact Topology to understand this
particular dynamical system.
\end{abstract}

\maketitle

\section{Introduction}

Starting with the fundamental contributions of Gromov and Floer,
holomorphic curve techniques had a great impact on our understanding
of the dynamics of Hamiltonian systems. The three body problem is a
 Hamiltonian system with a particular intriguing dynamics. During the
visit of the first two authors to the IAS in Princeton, Helmut Hofer
raised the question of whether holomorphic curve techniques can be applied
to the three body problem.

The power of holomorphic curve techniques relies on a particular
compactness result, called Gromov compactness. In order to prove
Gromov compactness the Hamiltonian system has to meet an additional
convexity assumption. Namely a Liouville vector field transverse to
energy hypersurfaces has to exist. We prove in this paper that this
assumption holds true for the planar circular restricted three body
problem for energy values below and slightly above the first
critical value.

The restricted three body problem assumes that two massive
primaries, which we refer to as the earth and the moon, rotate around
each other according to Kepler's law on ellipses. We are interested
in the dynamics of a third massless object called the satellite.
Since the satellite is massless it does not influence the
propagation of the primaries, however the primaries attract the
satellite according to Newton's law of gravitation. If the ellipses
are circles then the restricted three body problem in rotating
coordinates admits an integral, which we abbreviate by $H$. This
special case of the restricted three body problem is called the
circular restricted three body problem. If in addition the satellite
stays on the plane spanned by the earth and the moon the problem is
referred to as the planar circular restricted three body problem.

Energy hypersurfaces for the planar circular restricted three body
problems are three dimensional. However, they are not compact. This
is due to collisions of the satellite with one of the primaries.
However, it is well known in celestial mechanics that two body
collisions can be regularized. It was observed by Moser that the
regularized Kepler problem coincides with the geodesic flow of the
two sphere. In particular, energy hypersurfaces are diffeomorphic to
$\mathbb{R}P^3$. In Moser's regularization the position and momenta
variables are interchanged and the point at infinity on
the sphere corresponds to collision orbits whose velocity explodes.

For low energy levels $c \in \mathbb{R}$ the energy hypersurface
$\Sigma_c=H^{-1}(c)$ for the planar circular restricted three body
problem has three connected components. Indeed, the satellite either
has to stay close to the earth, close to the moon, or be far away.
We denote by $\Sigma_c^E$ and $\Sigma^M_c$ the connected component
close to the earth respectively close to the moon. Via Moser's
regularization these components can be compactified again to compact
manifolds $\overline{\Sigma}_c^E$ and $\overline{\Sigma}_c^M$ which
are both diffeomorphic to $\mathbb{R}P^3$.

On the axis between earth and moon there is a critical point of the
energy called the first Lagrange point, which we denote by $L_1$. If
the energy level $c$ becomes higher than the action of the first
Lagrange point $H(L_1)$, then the satellite is in principle able to
cross from the region around the earth to the region around the
moon. The energy hypersurface now consists of only two connected
components, one bounded which contains the regions around the earth
and the moon and an unbounded one. We abbreviate by $\Sigma_c^{E,M}$
the bounded component. Topologically this bounded component is just
the connected sum of the two bounded components we had before. In
particular, its regularization $\overline{\Sigma}_c^{E,M}$ is
diffeomorphic to $\mathbb{R}P^3 \# \mathbb{R}P^3$.

To state our main theorem we need the following convention.

\begin{Definition} {\rm Let $\Sigma$ be a $2n+1$-dimensional manifold. A Hamiltonian
structure on $\Sigma$ is a closed two-form whose kernel defines a
one-dimensional distribution. A \emph{contact form} on $\Sigma$ is a
1-form $\lambda$ such that $\lambda \wedge (d \lambda)^n$ is a
volume form. If $\omega$ is a Hamiltonian structure on $\Sigma$,
then a contact form $\lambda$ is called \emph{compatible} with
$\omega$ if $d\lambda=\omega$. }
\end{Definition}
\begin{Remark}{\rm 
Abraham and Marsden use in \cite{abraham-marsden} different
conventions. A Hamiltonian structure is in their language a contact
form, where a contact form in our sense corresponds to an exact
contact form.}
\end{Remark}
Moser's regularization is actually symplectic and therefore the
regularized hypersurfaces are naturally endowed with a Hamiltonian
structure. The main result of this paper is:
\\ \\
\textbf{Theorem A.} \emph{For $c<H(L_1)$ both connected components
$\overline{\Sigma}_c^E$ and $\overline{\Sigma}_c^M$ admit a
compatible contact form $\lambda$. Moreover, there exists
$\epsilon>0$ such that if $c \in (H(L_1),H(L_1)+\epsilon)$ the same
assertion holds true for $\overline{\Sigma}_c^{E,M}$.}
\begin{Remark}
{\rm The drawback of Theorem~A is that we only can prove the contact
condition until slightly above the first critical value. There is no
particular reason that the contact condition should fail for higher
energy values and we actually expect that for every energy level the
hypersurfaces are contact. However, this is an issue for further
research.}
\end{Remark}
\begin{Remark}
{\rm Moser regularization actually endows the compact components of the
energy hypersurfaces with a strong symplectic filling. In
particular, the contact structures obtained in Theorem~A are of restricted contact type and tight.}
\end{Remark}
Since there is only one tight contact structure on $\mathbb{R}P^3$
as well as on $\mathbb{R}P^3 \#\mathbb{R}P^3$ we obtain the
following corollary of Theorem~A.
\begin{Corollary}
For $c<H(L_1)$ the contact structures
$\big(\overline{\Sigma}_c^E,\ker \lambda\big)$ and
$\big(\overline{\Sigma}_c^M,\ker \lambda\big)$ coincide with the
tight $\mathbb{R}P^3$ and for $c \in (H(L_1),H(L_1)+\epsilon)$ the
contact structure $\big(\overline{\Sigma}_c^{E,M},\ker \lambda\big)$
coincides with the tight $\mathbb{R}P^3\#\mathbb{R}P^3$.
\label{cor:tight}
\end{Corollary}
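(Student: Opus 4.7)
The plan is to simply combine Theorem~A with the preceding Remark and a classification result for tight contact structures on the 3-manifolds in question. Concretely, Theorem~A produces, in each of the three cases, a contact form $\lambda$ compatible with the Hamiltonian structure inherited from Moser regularization. The Remark asserts that Moser regularization furnishes a strong symplectic filling of the regularized energy hypersurface; in particular the contact structure $\ker\lambda$ is of restricted contact type and therefore, by a theorem of Gromov and Eliashberg, symplectically fillable contact structures in dimension three are tight.

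Once tightness is established, the corollary reduces to invoking the classification of tight contact structures on the underlying smooth 3-manifolds. For $c < H(L_1)$ the regularized hypersurfaces $\overline{\Sigma}_c^E$ and $\overline{\Sigma}_c^M$ are diffeomorphic to $\mathbb{R}P^3$, as recalled in the introduction; by the classification of tight contact structures on lens spaces (Honda, building on Eliashberg and Giroux), $\mathbb{R}P^3$ carries, up to isotopy, a unique tight contact structure, the standard one descended from $S^3$. Hence $\ker\lambda$ must agree with the standard tight structure on $\mathbb{R}P^3$. For $c \in (H(L_1), H(L_1)+\epsilon)$ the regularized hypersurface $\overline{\Sigma}_c^{E,M}$ is diffeomorphic to $\mathbb{R}P^3 \# \mathbb{R}P^3$; by the uniqueness of the prime decomposition of tight contact 3-manifolds (Colin's connect-sum theorem, combined again with Honda's classification on each summand), there is a unique tight contact structure on $\mathbb{R}P^3 \# \mathbb{R}P^3$, namely the connected-sum of the two standard ones. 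Thus $\ker\lambda$ is forced to be this structure.

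In short, the argument is essentially: \emph{existence of a filling} $\Rightarrow$ \emph{tightness} $\Rightarrow$ \emph{uniqueness from known classification}. There is no real obstacle in the proof of the corollary itself beyond citing these classification theorems correctly; all the substantive work has already been carried out in establishing Theorem~A and in identifying the symplectic filling produced by Moser regularization.
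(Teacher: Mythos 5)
Your proposal is correct and follows essentially the same route as the paper: fillability from Moser regularization gives tightness via Eliashberg--Gromov, then uniqueness of the tight contact structure on $\mathbb{R}P^3$ (Theorem~\ref{thm:unique_tight_structure_RP3}) and the unique prime decomposition of tight contact $3$-manifolds (which the paper attributes to Ding--Geiges, Theorem~\ref{thm:unique_prime_decomposition}, rather than Colin) finish the argument. The only differences are in the attributions of the classification results, not in the substance of the proof.
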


We note that in general it is a very difficult problem to decide if a given
Hamiltonian structure admits a compatible contact form. A notorious concrete
case is that of Hamiltonian structures arising from Hamiltonians which contain
``magnetic terms'', i.e., terms which are linear in the momentum variables
(cf. \cite{CMP,CFP}). This is precisely the case of the planar circular restricted
three body problem which picks up a term of the form $p_1q_2-p_2q_1$ due to the
rotating frame.

Before we mention two applications of Theorem A we point out that the strong symplectic filling of the regularized energy hypersurface $\overline{\Sigma}_c^{E}$ is $T^*_{\leq1}S^2$, the unit co-disk bundle of $S^2$, and of $\overline{\Sigma}_c^{E,M}$ is the
boundary connected sum $T^*_{\leq 1}S^2 \natural T^*_{\leq 1}S^2$, see Remark \ref{rmk:bdry_connected_sum}.

The first  application concerns existence of periodic orbits.
It follows from the fact that the contact structures are tight, and Rabinowitz's result on existence of closed orbits on starshaped hypersurfaces \cite{rab}. The details of this derivation are explained by  Hofer in \cite{hofer} together with the fact that the lift of the tight contact structure on $\mathbb{R}P^3$ to $S^3$ is the unique tight contact structure on $S^3$.

\begin{Corollary} For any value $c<H(L_{1})$, the regularized planar circular restricted
three body problem has a closed orbit with energy $c$.
\end{Corollary}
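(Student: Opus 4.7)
The plan is to combine Theorem~A and Corollary~\ref{cor:tight} with the classical existence result of Rabinowitz on closed characteristics of star-shaped hypersurfaces, using the double cover $S^3\to\mathbb{RP}^3$ to reduce the question to a statement in $\mathbb{R}^4$. The only piece not already supplied by contact-geometric input from the preceding material is a routine identification of the Reeb dynamics with the regularized Hamiltonian dynamics.

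First, I would invoke Theorem~A to obtain, for each $c<H(L_1)$, a contact form $\lambda$ compatible with the Hamiltonian structure induced by Moser's regularization on $\overline{\Sigma}_c^E$ (respectively $\overline{\Sigma}_c^M$). By the remark following Theorem~A this realizes $\overline{\Sigma}_c^E$ as a hypersurface of restricted contact type inside the Liouville filling $T^*_{\leq1}S^2$; in particular there is a Liouville vector field transverse to the hypersurface, and the contact structure $\xi=\ker\lambda$ is tight. By Corollary~\ref{cor:tight}, $\xi$ is contactomorphic to the unique tight contact structure on $\mathbb{RP}^3$.

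Second, I would pull $\lambda$ back along the universal double cover $p\colon S^3\to\mathbb{RP}^3$ to obtain an invariant contact form $\widetilde\lambda=p^*\lambda$ whose contact structure is tight (tightness passes to covers) and hence, by Eliashberg's uniqueness theorem, coincides with the standard tight contact structure on $S^3$. A contactomorphism to the standard model then realizes $(S^3,\widetilde\lambda)$ as the boundary of a star-shaped domain in $(\mathbb{R}^4,\omega_0)$, and Rabinowitz's theorem \cite{rab} produces a closed Reeb orbit of $\widetilde\lambda$ on $S^3$. Projecting this orbit back along $p$ yields a closed Reeb orbit of $\lambda$ on $\mathbb{RP}^3$, of either the same period or one half of it depending on whether the lift is invariant under the deck involution.

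Finally, I would translate this closed Reeb orbit into a periodic orbit of the regularized three body problem. Since $d\lambda=\omega$ and the kernel of $\omega$ along $\overline{\Sigma}_c^E$ is the characteristic line field, which is also the line spanned by the Hamiltonian vector field on this level set, the Reeb flow is a smooth time reparametrization of the restricted Hamiltonian flow. Hence the closed Reeb orbit is a periodic orbit of the regularized planar circular restricted three body problem at energy $c$, and the same argument applied to $\overline{\Sigma}_c^M$ completes the proof. The sole delicate step is the invocation of Eliashberg's tight-uniqueness theorem on $\mathbb{RP}^3$; everything else is bookkeeping that rests on the substantive content already contained in Theorem~A.
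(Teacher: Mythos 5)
Your argument is correct and is essentially the paper's own: the paper proves this corollary by exactly the chain tightness of $\ker\lambda$ $\Rightarrow$ lift to $S^3$ is the unique tight (standard) contact structure $\Rightarrow$ realization as a star-shaped hypersurface $\Rightarrow$ Rabinowitz's existence theorem, citing Hofer for the details you spell out. The only difference is that you write out the bookkeeping (the double cover, the reparametrization identifying Reeb and Hamiltonian dynamics) that the paper delegates to \cite{hofer}.
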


\begin{Remark}\rm
The assertion of the previous corollary is trivially true for larger values of $c$ since the standard Weinstein model for the connected sum contains a periodic orbit.
\end{Remark}

The second application concerns leaf-wise intersection points. This notion is due to Moser \cite{Moser_A_fixed_point_theorem_in_symplectic_geometry}. We recall that if $\Sigma\subset(W,\omega)$ is a regular hypersurface then $\Sigma$ is foliated by the leaves of the characteristic line bundle $\ker\omega|_\Sigma$. We denote by $L_x$ the leaf through $x\in\Sigma$ and furthermore by $\mathrm{Ham}_c(W)$ the set of Hamiltonian diffeomorphisms generated by a Hamiltonian function with compact support. Then, by definition, $x\in\Sigma$ is a leaf-wise intersection point with respect to $\psi\in\mathrm{Ham}_c(W)$ if $\psi(x)\in L_x$.

\begin{Corollary} 
Let $\epsilon$ be the same as in Theorem A. Then for $c <H(L_1)+\epsilon, c\neq H(L_1)$ and any $\psi\in\mathrm{Ham}_c(T^*S^2)$ the regularized energy hypersurface $\overline{\Sigma}_c^{E,M}$ carries a leaf-wise intersection point with respect to $\psi$ .
\end{Corollary}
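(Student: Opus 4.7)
The plan is to deduce the corollary from Theorem A by means of Rabinowitz Floer homology, following the scheme developed for leaf-wise intersections in exact symplectic manifolds. By Theorem A together with the remark on strong symplectic fillings, for every admissible energy $c$ the regularized hypersurface is of restricted contact type inside its exact symplectic filling: $T^{*}_{\leq 1}S^{2}$ for $c<H(L_{1})$, and the boundary connected sum $T^{*}_{\leq 1}S^{2}\natural T^{*}_{\leq 1}S^{2}$ for $c\in(H(L_{1}),H(L_{1})+\epsilon)$. Attaching a cylindrical end, we may regard the filling as a geometrically bounded Liouville manifold, putting us in the standard setting where the perturbed Rabinowitz action functional is defined.

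Step one is to recall that the critical points of
$\mathcal{A}^{F}(x,\eta)=\int_{S^{1}} x^{*}\lambda-\eta\int_{0}^{1}H(x(t))\,dt-\int_{0}^{1}F_{t}(x(t))\,dt$,
where $H$ is a defining Hamiltonian for $\Sigma$ and $F_{t}$ is a compactly supported Hamiltonian generating $\psi$, are in bijection with pairs $(x,\eta)$ such that $x(0)\in\Sigma$, $x(1)=\psi(x(0))\in L_{x(0)}$, and $\eta$ is the period. Thus it suffices to show $\mathrm{Crit}(\mathcal{A}^{F})\neq\emptyset$ for every admissible $F$.

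Step two is to exhibit non-vanishing Rabinowitz Floer homology $RFH(\Sigma,W)$. For the unit disc bundle $T^{*}_{\leq 1}S^{2}$ this group has been computed and agrees, in the relevant range, with a quotient of the free loop space homology $H_{*}(\Lambda S^{2})$, which is non-zero. A continuation argument between the unperturbed and perturbed functional then yields, for every compactly supported $F$, a critical point of $\mathcal{A}^{F}$, and hence a leaf-wise intersection. For the connected-sum regime $c\in(H(L_{1}),H(L_{1})+\epsilon)$ the filling is $T^{*}_{\leq 1}S^{2}\natural T^{*}_{\leq 1}S^{2}$, and I would invoke invariance of $RFH$ under subcritical Weinstein handle attachment (the gluing $1$-handle is subcritical of index $1$) to conclude that $RFH$ of the new filling is again non-zero, together with the observation that loop-space classes from either summand survive the handle attachment.

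The principal obstacle is compactness of the moduli spaces of gradient flow lines of $\mathcal{A}^{F}$. Since $\mathcal{A}^{F}$ is unbounded above and below, compactness rests on a uniform $L^{\infty}$ bound on the Lagrange multiplier $\eta$ along gradient flow lines, and this bound requires a global Liouville vector field transverse to $\Sigma$ (not merely a stable Hamiltonian structure). This is precisely the geometric input supplied by Theorem A, and is the step where the result is being used in an essential way. A secondary difficulty, specific to the connected-sum regime, is to verify that the estimates are uniform across the Weinstein $1$-handle joining the two cotangent bundles; I would handle this using the explicit Liouville model of the handle to rule out gradient flow lines escaping along the neck.
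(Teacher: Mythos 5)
Your strategy is the same as the paper's: reduce the corollary to non-vanishing of Rabinowitz Floer homology of the regularized hypersurface inside its strong symplectic filling (which exists precisely because Theorem A gives restricted contact type), and then quote the detection theorem for leaf-wise intersections via the perturbed Rabinowitz action functional (Albers--Frauenfelder). The only substantive difference is how you justify the non-vanishing in the connected-sum regime. The paper does not compute $RFH$ directly or appeal to an ``invariance of $RFH$ under subcritical handle attachment''; instead it uses the Cieliebak--Frauenfelder--Oancea theorem identifying $RFH$ with $SH_*\oplus SH^*$ (up to finitely many degrees), then Cieliebak's handle-attachment theorem $SH_*(T^*_{\leq 1}S^2\natural T^*_{\leq 1}S^2)\cong SH_*(T^*_{\leq 1}S^2)\times SH_*(T^*_{\leq 1}S^2)$, and finally the Viterbo/Salamon--Weber/Abbondandolo--Schwarz isomorphism $SH_*(T^*_{\leq 1}S^2)\cong H_*(\Lambda S^2)$ to conclude infinite-dimensionality. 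Your ``$RFH$ is invariant under subcritical handle attachment'' is not an off-the-shelf theorem in that form (and is not literally true as stated, since the boundary and its $RFH$ both change); the chain of results above is exactly the rigorous substitute for it, so you should route the connected-sum computation through symplectic homology as the paper does. Your extra discussion of compactness and the $L^\infty$ bound on the Lagrange multiplier is correct in spirit but is entirely subsumed in the cited detection theorem, which requires precisely the restricted contact type hypothesis that Theorem A provides.
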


\begin{proof}
Since the contact structures are of restricted contact type we can apply the theorem by Cieliebak-Frauenfelder-Oancea \cite{Cieliebak_Frauenfelder_Oancea_Rabinowitz_Floer_homology_and_symplectic_homology} and obtain that the Rabinowitz Floer homology of the energy hypersurface as defined by Cieliebak-Frauenfelder \cite{Cieliebak_Frauenfelder_Restrictions_to_displaceable_exact_contact_embeddings} is isomorphic to the direct sum of symplectic homology and cohomology (except possibly in finitely many degrees.) By Cieliebak's \cite{Cieliebak_Handle_attaching_in_symplectic_homology_and_the_chord_conjecture} computation of the symplectic homology of a boundary-connected sum, see also McLean \cite[Theorem 2.20]{McLean_Lefschetz_fibrations_and_symplectic_homology} for an improvement concerning the ring structure, we know
$$
SH_*(T^*_{\leq 1}S^2 \natural T^*_{\leq 1}S^2)\cong SH_*(T^*_{\leq 1}S^2)\times SH_*(T^*_{\leq 1}S^2)\;.
$$
Combining this with the work of Viterbo \cite{Viterbo_Symplectic_topology_as_the_geometry_of_generating_functions}, Salamon-Weber \cite{Salamon_Weber_Floer_homology_and_heat_flow}, Abbondandolo-Schwarz \cite{Abbo_Schwarz_On_the_Floer_homology_of_cotangent_bundles}
$$
SH_*(T^*_{\leq 1}S^2)\cong H_*(\Lambda S^2)
$$
where $\Lambda S^2$ denotes the free loop space of $S^2$ we see that in both cases, $c<H(L_1)$ and $c <H(L_1)+\epsilon$, Rabinowitz Floer homology of the energy hypersurface is infinite dimensional, in particular, non-zero. Thus, work by Albers-Frauenfelder \cite[Theorem C]{Albers_Frauenfelder_Leafwise_intersections_and_RFH} implies the Corollary.
\end{proof}

\begin{Remark}\rm
We remark that Theorem A and all its corollaries are valid for any mass ratio between the two primaries.
\end{Remark}

\section{Elements of $3$-dimensional contact topology}
In the proof of Theorem~A it is convenient, although not necessary, to use some contact topology.
In doing so, we can regard the dynamical problem from another perspective.
Let us begin by defining some key notions.

\begin{Definition}
A contact $3$-manifold $(M,\xi)$ is called {\bf overtwisted} if $(M,\xi)$ admits an overtwisted disk, i.e.,
an embedded disk $\Delta$ such that $\xi_p=T_p\Delta$ for all $p\in \partial \Delta$.
Contact $3$-manifolds that are not overtwisted are called {\bf tight}.
\end{Definition}
The contact topological behavior of tight and overtwisted contact manifolds differs very much.
Roughly speaking, one could say that tight $3$-manifolds are much more rigid than overtwisted ones.
Finding tight contact structures is a hard problem in general, but there is a simple criterion due to Eliashberg and Gromov, see \cite{eliashberg} and \cite{gromov}.
\begin{Theorem}[Eliashberg, Gromov]
\label{thm:fillable_tight}
Suppose $(M,\xi)$ is a symplectically fillable $3$-manifold, then $(M,\xi)$ is tight.
\end{Theorem}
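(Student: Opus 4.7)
The plan is to prove the contrapositive: if $(M,\xi)$ contains an overtwisted disk $\Delta$, then $(M,\xi)$ cannot admit a symplectic filling. The strategy follows Gromov's original argument, using a \emph{Bishop family} of pseudo-holomorphic disks attached to $\Delta$. First I would normalize: after a small perturbation of $\Delta$ inside $M$, one may assume that the characteristic foliation $\xi \cap T\Delta$ has exactly one interior singularity at the center $p_0$ of $\Delta$, that this singularity is elliptic, and that $\partial \Delta$ is a limit cycle. Given a symplectic filling $(W,\omega)$ with $\partial W = M$, I then choose an $\omega$-compatible almost complex structure $J$ on $W$ that is adapted near $M$: it preserves $\xi$, tames $d\lambda|_\xi$ for the contact form $\lambda$, and sends the inward Liouville direction to the Reeb direction.

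The key classical input is Bishop's local theorem: near the elliptic singularity $p_0$, there is a smooth $1$-parameter family $\{u_t \colon (D^2, \partial D^2) \to (W,\Delta)\}_{t \in [0,t_0)}$ of embedded $J$-holomorphic disks, with $u_0 \equiv p_0$, shrinking to $p_0$ as $t \to 0$, and meeting $\Delta$ transversally for $t>0$. Let $\mathcal{M}$ denote the connected component of the moduli space of $J$-holomorphic disks with boundary on $\Delta$ that contains this family. Automatic transversality in dimension $4$ (for disks with totally real boundary along the characteristic foliation direction) makes $\mathcal{M}$ a smooth $1$-manifold away from the base point, with precisely one end at $p_0$.

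The heart of the proof is to show that $\mathcal{M}$ cannot close up, which will contradict classification of $1$-manifolds. I would control $\mathcal{M}$ via three ingredients. The first is an a priori area bound: since $\omega = d\lambda$ on a collar of $M$, the symplectic area of any $u \in \mathcal{M}$ equals $\int_{\partial u}\lambda$, which is uniformly bounded because $\partial u \subset \Delta$ is compact. The second is convexity of the boundary: the adapted $J$ together with pseudoconvexity of $M$ in $(W,\omega)$ forces, via the maximum principle, every $u \in \mathcal{M}$ to remain in $W$ and to have boundary genuinely in the interior region of $\Delta$. The third is exclusion of bubbling: sphere bubbles are ruled out by $\omega$-exactness in a neighborhood of $M$, and disk bubbles off $\Delta$ would themselves be Bishop disks, causing $\mathcal{M}$ to branch rather than terminate. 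Gromov compactness therefore shows $\mathcal{M}$ compactifies to a compact connected $1$-manifold with exactly one boundary point, namely $p_0$. This is impossible, so no such $\Delta$ can exist.

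The main obstacle is the subtle boundary behavior along $\partial\Delta$: because $\xi = T\Delta$ there, the usual totally real boundary condition degenerates precisely where one most wants the Bishop family to be trapped. The argument that $\mathcal{M}$ cannot reach $\partial\Delta$ in finite time hinges on the fact that $\partial\Delta$ is a closed leaf of the characteristic foliation with tangency of $\xi$, which via a local Reeb model shows any limiting disk touching $\partial\Delta$ would have to be tangent to $\xi$ along a boundary arc and hence constant by unique continuation. Once this delicate local analysis is in place, the rest is bookkeeping with Bishop's theorem, Gromov compactness, and the topology of $1$-manifolds.
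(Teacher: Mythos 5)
First, a point of order: the paper does not prove this theorem at all. It is quoted as a classical result with references to Eliashberg and Gromov, so there is no in-paper argument to measure you against. What you have written is a sketch of the actual known proof -- Gromov's and Eliashberg's ``filling by holomorphic disks'' argument -- and the skeleton is the right one: normalize the characteristic foliation of $\Delta$ by the elimination lemma, generate a Bishop family at the elliptic singularity, use an adapted $J$ and boundary pseudoconvexity to confine the disks, and derive a contradiction with the classification of compact $1$-manifolds. You have also correctly identified the crux, namely the degeneration of the totally real boundary condition along $\partial\Delta$, which is what traps the family and produces the contradiction.

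Two of your steps are too glib as stated. First, the energy identity $\int_{D^2}u^*\omega=\int_{\partial u}\lambda$ requires $\omega$ to be globally exact on $W$; a strong filling only gives exactness on a collar of $M$. The correct uniform bound uses connectedness of the moduli space: the derivative of $t\mapsto\int u_t^*\omega$ equals the flux of the boundary circles through $\Delta$ (where $\omega|_{TM}=d\lambda$ does hold), so every disk in the component of the constant map has energy at most $\int_\Delta\omega$. (For the fillings this paper actually produces -- Liouville domains -- your identity is fine as written.) Second, and more seriously, ``sphere bubbles are ruled out by $\omega$-exactness in a neighborhood of $M$'' does not work: bubbles form at interior points of $W$, and a strong filling can perfectly well contain nonconstant $J$-holomorphic spheres (blow up any filling). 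One must either blow down to a minimal filling first, or exclude sphere bubbling by positivity of intersections against the embedded Bishop disks, or restrict to exact fillings. Finally, your claim that a limiting disk touching $\partial\Delta$ ``would be tangent to $\xi$ along a boundary arc and hence constant by unique continuation'' is morally right but is not quite the argument: the standard route shows the boundaries $\partial u_t$ are embedded, transverse to the characteristic foliation, and wind once around $p_0$, and that a boundary tangency of a $J$-holomorphic disk with $\Delta$ at a point where $T\Delta=\xi$ contradicts positivity of $d\lambda$ on complex tangencies. With those three points repaired, your outline is the accepted proof.
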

By symplectic fillability we mean the following.
\begin{Definition}
{\rm A symplectic manifold $(W,\omega)$ is called a {\bf strong symplectic filling} for $(M,\xi)$ if $\partial W=M$ and there is a Liouville vector field $X$ pointing outward along $M$ such that $\xi=\ker (i_X \omega)|_M$.}
\end{Definition}

Some uniqueness results are known about tight $3$-manifolds.
Note that the general situation can be much more complicated, but the following suffices for our needs.
\begin{Theorem}[Eliashberg]
\label{thm:unique_tight_structure_RP3}
The closed $3$-manifold $\R P^3$ admits a unique tight contact structure up to isotopy.
\end{Theorem}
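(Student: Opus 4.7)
The plan is to deduce uniqueness on $\R P^3$ from Eliashberg's earlier classification of tight contact structures on $S^3$, combined with an equivariant Gray stability argument along the universal double cover $\pi\colon S^3 \to \R P^3$.

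I would first take as an input Eliashberg's theorem that the standard contact structure $\xi_{\mathrm{std}} = \ker\bigl((x_1\,dy_1 - y_1\,dx_1 + x_2\,dy_2 - y_2\,dx_2)|_{S^3}\bigr)$ is the unique tight contact structure on $S^3$ up to isotopy. Given now any tight contact structure $\xi$ on $\R P^3$, form the pullback $\tilde\xi := \pi^{*}\xi$ on $S^3$, which is automatically invariant under the antipodal deck transformation $\sigma$. The first step is to show that $\tilde\xi$ is tight. An overtwisted disk $\Delta \subset (S^3, \tilde\xi)$ would project to an immersed overtwisted disk in $(\R P^3,\xi)$; by an innermost-intersection argument one extracts an embedded overtwisted disk downstairs, contradicting the tightness of $\xi$. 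As a fallback one could cite the general fact that tightness is preserved under finite covers, established via holomorphic disk methods.

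Next, by $S^3$ uniqueness, $\tilde\xi$ is isotopic to $\xi_{\mathrm{std}}$. Since the antipodal involution $\sigma$ on $S^3\subset \C^2$ preserves $\xi_{\mathrm{std}}$, both structures are $\sigma$-equivariant tight contact structures on $S^3$. I would then upgrade Gray's stability theorem to the $\mathbb{Z}/2$-equivariant category: pick a path of $\sigma$-invariant contact forms interpolating $\tilde\xi$ and $\xi_{\mathrm{std}}$ (using convexity of the space of contact forms defining a fixed cooriented plane field to stay contact after averaging over the group), and run Moser's trick with a $\sigma$-invariant generating vector field. The resulting equivariant isotopy descends to $\R P^3$, carrying $\xi$ to the quotient of $\xi_{\mathrm{std}}$, which serves as the canonical tight model on $\R P^3$.

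The principal obstacle is precisely this equivariant upgrade of Gray's theorem, since generic Gray isotopies are non-canonical and can break the $\mathbb{Z}/2$-symmetry; controlling the averaging while keeping the contact condition is the delicate point. An alternative I would keep in reserve is to bypass the lift entirely and apply Honda's convex surface classification of tight contact structures on lens spaces to $L(2,1) = \R P^3$, which yields the uniqueness count directly from the continued fraction expansion of $2/1$; this route avoids the equivariance issue at the cost of invoking the full machinery of dividing curves and bypasses on convex surfaces.
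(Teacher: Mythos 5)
The paper does not actually prove this statement: Theorem~\ref{thm:unique_tight_structure_RP3} is quoted as a black box attributed to Eliashberg, so your attempt can only be measured against the known proofs in the literature. Measured that way, your primary route has a genuine gap at its very first step. It is simply not true that tightness passes to finite covers: tight contact structures split into \emph{universally tight} and \emph{virtually overtwisted} ones, and both kinds exist (Honda and Giroux exhibit tight contact structures on lens spaces such as $L(4,1)$ and on torus bundles whose pullbacks to finite covers are overtwisted). Correspondingly, your ``innermost-intersection'' surgery on the immersed image of an overtwisted disk cannot work in general: resolving the self-intersections destroys precisely the boundary condition $\xi_p=T_p\Delta$ along $\partial\Delta$ that makes the disk overtwisted. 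The implication that \emph{does} hold is the opposite one --- an embedded overtwisted disk downstairs lifts to the cover because a disk is simply connected, so a tight cover forces a tight base. For $\R P^3$ the lift of a tight structure to $S^3$ is in fact tight, but every proof of that fact I know of goes through the classification you are trying to establish, so your step (1) is essentially circular.

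The equivariant Gray step is also not sound as written. Averaging contact forms over the deck group only stays in the contact class when all the forms involved define the \emph{same} cooriented plane field; along a Gray path from $\tilde\xi$ to $\xi_{\mathrm{std}}$ the plane fields $\xi_t$ and $\sigma^*\xi_t$ differ, so the convexity you invoke does not apply and the averaged form need not satisfy $\alpha\wedge d\alpha>0$. Promoting ``isotopic'' to ``$\sigma$-equivariantly isotopic'' genuinely requires more input, e.g.\ Eliashberg's contractibility of the space of tight contact structures on $S^3$ together with an honest equivariant argument. Your reserve route --- Giroux/Honda's convex-surface classification on lens spaces, which for $L(2,1)$ gives the count $\lvert -2+1\rvert=1$ from the continued fraction expansion of $-2/1$ --- is correct and is the standard modern proof, but at that point you are citing the theorem rather than proving it, which is exactly what the paper itself does.
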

This unique tight contact structure is, in fact, symplectically fillable; its filling is $T^*S^2$.
Alternatively, we can see $(\R P^3,\xi_0)$ as the contact manifold obtained from $(S^3,\xi_0)$ by dividing out by the antipodal map.

Finally, let us point out that tight manifolds admit a unique prime decomposition, see \cite{Ding_Geiges}.
\begin{Theorem}[Ding-Geiges]
\label{thm:unique_prime_decomposition}
Every non-trivial tight contact $3$-manifold $(M,\xi)$ is contactomorphic to a connected sum
$$
(M_1,\xi_1)\# \cdots \# (M_k,\xi_k)
$$
of finitely many prime tight contact $3$-manifolds.
The summands are unique up to order and contactomorphism.
\end{Theorem}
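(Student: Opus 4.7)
The plan is to combine the topological prime decomposition of 3-manifolds (Kneser--Milnor) with convex surface theory in the contact category, using as a key anchor the fact that the tight contact structure on the 3-ball is unique (which follows from Eliashberg's work and is closely related to Theorem~\ref{thm:unique_tight_structure_RP3}, as $S^3$ has a unique tight contact structure whose restriction to a ball is standard).

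For existence of a prime decomposition, I would start with a maximal system $\Sigma = S_1 \sqcup \cdots \sqcup S_m$ of pairwise disjoint, non-parallel, essential embedded 2-spheres in $M$ realizing the topological Kneser--Milnor decomposition. Using Giroux's theorem that any embedded closed surface in a contact 3-manifold can be $C^\infty$-perturbed to be convex, I would isotope each $S_i$ to a convex sphere. Since $(M,\xi)$ is tight, the Giroux tightness criterion forces the dividing set on each convex 2-sphere to be a single circle, so a neighborhood $S_i \times [-1,1]$ carries the unique (up to isotopy) tight contact structure on $S^2 \times I$. Cutting $M$ along the $S_i$ and capping each resulting boundary 2-sphere with the unique tight 3-ball (with matching characteristic foliation, using Eliashberg's uniqueness result for tight structures on $B^3$) produces finitely many closed contact 3-manifolds $(M_i,\xi_i)$; these are prime because the $S_i$ were chosen to realize the topological prime decomposition, and tight because tightness is preserved under such cutting (a local overtwisted disk in $M_i$ would persist in $M$). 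Conversely, applying Colin's gluing theorem that the contact connected sum of tight manifolds is tight shows that $(M_1,\xi_1)\#\cdots\#(M_k,\xi_k)$ recovers $(M,\xi)$.

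For uniqueness, suppose $(M,\xi)$ has two prime decompositions with sphere systems $\Sigma$ and $\Sigma'$. By the topological uniqueness part of Kneser--Milnor, there is an ambient diffeomorphism of $M$ carrying $\Sigma$ to $\Sigma'$. To upgrade this to the contact category, I would arrange both sphere systems to be convex with standard (single-circle) dividing sets, then use Giroux's realization/flexibility theorem to isotope them through convex spheres so that their characteristic foliations match a standard model on a collar. The contact structures on the complementary pieces are then determined up to isotopy by the dividing data on the boundary together with the tight capping 3-ball, so each prime summand of one decomposition is contactomorphic to a summand of the other.

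The main obstacle is the convex-surface step: one must ensure that dividing curves on the spheres can really be simultaneously normalized on a maximal essential sphere system, and that cutting and capping is well-defined up to contactomorphism. This is exactly where tightness is crucial, since it forces the minimal possible dividing set (one circle) on each $S^2$, making the cap-off by the standard tight ball canonical; in the overtwisted setting the analogous cutting would create ambiguity, which is why the statement requires tightness throughout. The parallel between this argument and the topological proof of Kneser--Milnor is close, with Giroux's convex-surface technology playing the role of general position for surfaces in 3-manifolds.
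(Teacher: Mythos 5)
This statement is quoted in the paper as a theorem of Ding--Geiges with a citation to \cite{Ding_Geiges}; the paper supplies no proof, so there is nothing internal to compare against. Your sketch does follow the broad strategy of the cited reference (topological Kneser--Milnor decomposition, convex spheres, connectedness of dividing sets forced by tightness, Eliashberg's uniqueness of the tight ball, Colin's gluing theorem), but as written it has two genuine gaps.

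First, in the existence step your justification that the capped-off summands $(M_i,\xi_i)$ are tight --- ``a local overtwisted disk in $M_i$ would persist in $M$'' --- does not work: an overtwisted disk in $M_i$ may pass through the capping ball and therefore need not live in the part of $M_i$ that embeds into $M$. The correct argument is that $M_i$ minus a ball embeds in the tight $M$ and hence is tight, and then one must invoke Colin's gluing theorem (tightness is preserved under attaching the standard tight ball along a convex sphere with connected dividing set) to conclude that $M_i$ itself is tight. Second, and more seriously, the uniqueness half is where essentially all of the work in Ding--Geiges lies, and your reduction to Kneser--Milnor plus Giroux flexibility is insufficient. Kneser--Milnor uniqueness identifies the summands up to homeomorphism; it does not hand you an ambient diffeomorphism, let alone a contact isotopy, carrying one maximal sphere system to the other, and Giroux's realization theorem only adjusts the characteristic foliation on a \emph{fixed} convex surface --- it does not let you move one convex sphere onto another while controlling the contact geometry. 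What is needed here is Colin's theorem that in a tight contact $3$-manifold two topologically isotopic embedded spheres are contact isotopic (after making them convex with connected dividing sets), together with an argument that the resulting summands do not depend on the chosen sphere system. Without that input the claim that ``the contact structures on the complementary pieces are then determined up to isotopy by the dividing data on the boundary'' is an assertion of the theorem's hardest point rather than a proof of it.
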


By prime we mean that a manifold is not decomposable in a non-trivial connected sum; non-triviality is needed to exclude connected sums with $(S^3,\xi_0)$.

\subsection{Weinstein model for connected sums and symplectic handles}
Let us briefly recall the Weinstein model for connected sums.
Consider $(\R^{2n+2},\omega_0=d \vec x\w d \vec y +dz\w dw)$, where $\vec x,\vec y \in \R^{n}$ and $z,w\in \R$.
Observe that the Liouville vector field
$$
X=\frac{1}{2}( \vec x \partial_{\vec x}+\vec y \partial_{\vec y})+2 z \partial_{z} -w \partial_{w}
$$
is transverse to level sets of $f=\vec x^2+\vec y^2+z^2-w^2$ provided $f\neq 0$.
This observation is the starting point to show that connected sums of contact manifolds carry a contact structure.

Let $(M_1,\xi_1)$ and $(M_2,\xi_2)$ be contact manifolds, and choose Darboux balls $D_i\subset M_i$.
To perform the contact connected sum, observe that we can also embed $D_1 \cup D_2$ in the two connected components of $f^{-1}(-1)$.
In fact, one could regard $f^{-1}(-1)$ as two disjoint Darboux balls embedded as contact hypersurfaces in $\R^{2n+2}$.
Now connect these two Darboux balls by ``interpolating" $f^{-1}(-1)$ to $f^{-1}(1)$.
This is the neck of the connected sum.
Note that this interpolation process is possible, since $f^{-1}(-1)$ and $f^{-1}(1)$ are close to each other for large $w$.
The Liouville vector field $X$ is transverse to this neck, and hence the connected sum is contact.

\begin{Remark}
{\rm Alternatively, one could use the Liouville flow to ``map" $f^{-1}(-1)$ to $f^{-1}(1)$.
In this process, some isotopy is still necessary, though.}
\end{Remark}

\subsection{Symplectic handles}
\label{sec:symplectic_handles}
Suppose we want to perform symplectic handle attachment.
To do so, consider the symplectic $1$-handle,
$$
SH:=
\{ -2 \leq f \leq 2 \}
,
$$
and let $W$ be a (not necessarily connected) symplectic cobordism $W$ with boundary $\partial W=M$.
Take $p_1$ and $p_2$ points in $M$ along which we shall attach the handle.

By Darboux's theorem we find neighborhoods $D_i$ of $p_i$ that are (strictly) contactomorphic to $(\R^{2n+1},\alpha_0)$.
That means that we can also find a strict contactomorphism $\phi_1 \cup \phi_2$ from $D_1\cup D_2$ to a compact subset of the two components of the level set $f^{-1}(-1)$.

Furthermore, $W$ is a symplectic cobordism, so a collar neighborhood of the boundary is symplectomorphic to $([-1,0]\times M,d (e^t\alpha) )$.
We can construct a symplectomorphism from a collar neighborhood of $D_i$ to a compact subset of $SH$ by gluing flow lines of the Liouville vector field on $W$ ($\partial_t$ in a collar neighborhood of the boundary) to flow lines of the Liouville vector field on $SH$.

In other words, the symplectomorphism is of the form
\begin{eqnarray*}
\psi_i: [-1,0]\times D_i \subset [-1,0]\times M & \longrightarrow & SH\\
(t,x) & \longmapsto & Fl^X_{t}(\phi_i(x))
.
\end{eqnarray*}

\begin{Remark}{\rm 
The above formula is not quite accurate, since the amount a flow lines changes the value of $f$ in $SH$ depends on the the precise starting point.
In other words, $\psi_i$ maps the collar neighborhood $[-1,0]\times D_i$ into a compact subset of $SH$, see Figure~\ref{fig:weinstein_surgery} for a sketch.}
\end{Remark}

Define $SH_c$ to be the compact subset of $SH$ consisting of point $x$ that lie on a flow line of $X$ through $\psi(0,D_i)$.
Define the symplectic manifold obtained by handle attachment by 
$$
\tilde W:=W \bigcup SH_c / \sim
,
$$
where we identify $(t,x)\in [-1,0] \times D_i \subset W$ with $p$ in $SH_c$ if and only if $p=\psi(t,x)$.
See Figure~\ref{fig:weinstein_surgery} for an illustration of the symplectic handle attachment.

\begin{figure}[htp]
  \centering
  \includegraphics[width=0.6\textwidth,clip]{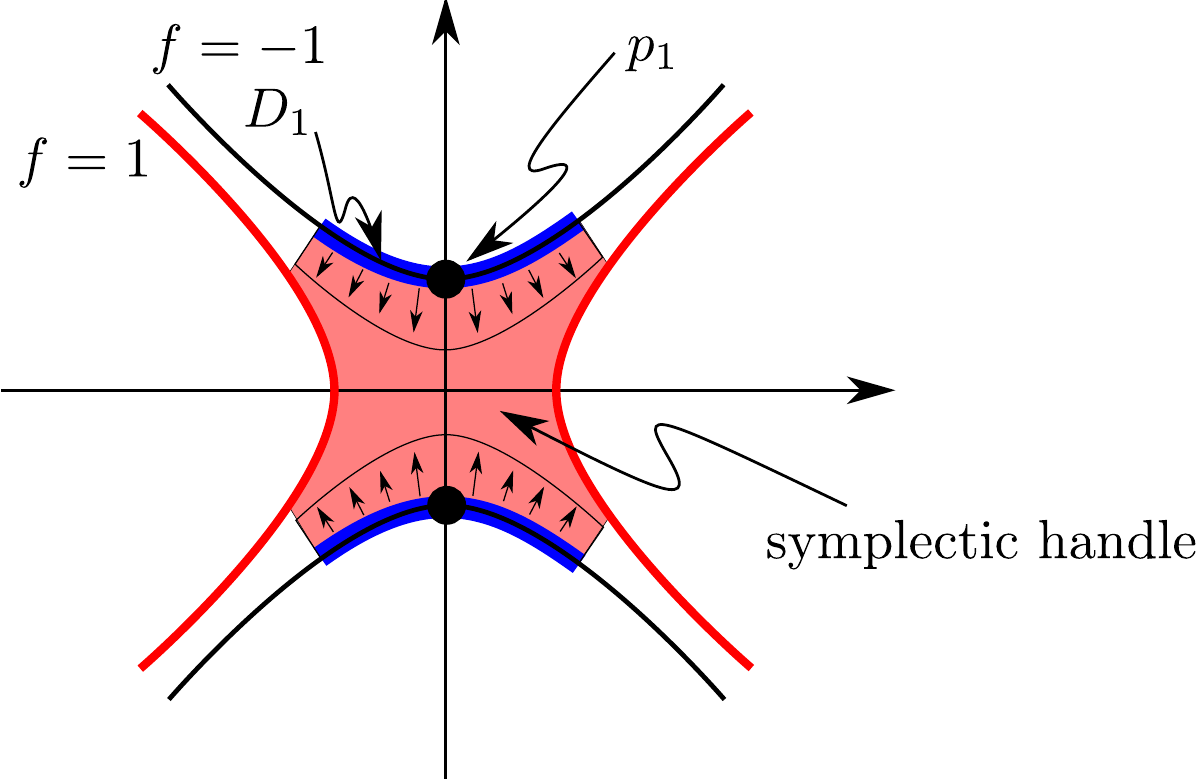}
  \caption{A symplectic handle}
  \label{fig:weinstein_surgery}
\end{figure}

\section{Moser regularization}
\label{section:reg}
In this section we recall the regularization introduced by J. Moser in \cite{moser}. This regularization will be used in subsequent sections, but here as a warm-up problem we apply it to the following integrable Hamiltonian on
$T^*(\R^2\setminus\{(0,0)\})$:
\begin{equation}
H(p,q)=\frac{1}{2}|p|^2-\frac{1}{|q|}+p_{1}q_{2}-p_{2}q_{1}.
\label{eq:hkr}
\end{equation}
This Hamiltonian corresponds precisely to the case when we set $\mu=0$
in (\ref{mh}) and it describes the Kepler problem in a rotating frame.

The first step is to change the independent variable $t$ to $s$ by setting
\[s=\int\frac{dt}{|q|}\]
and consider a new Hamiltonian $K$ defined by
\[H=\frac{K}{|q|}+k.\]
Here $k$ is the energy level of $H$ that we are interested in regularizing. 
The observation now is that for points at $K=0$ (which corresponds to $H=k$) one has
\[H_{p}=\frac{K_{p}}{|q|},\;\;\;\;\;\;H_{q}=\frac{K_{q}}{|q|}.\]
Then the orbits of $H$ with energy $k$ and time parameter $t$ correspond
to orbits in $K=0$ with time parameter $s$.
We now compute $K$ and we obtain:
\[K=\frac{1}{2}(|p|^2+1)|q|+|q|(p_{1}q_{2}-p_{2}q_{1})+(-k-1/2)|q|-1.\]
The second step consists in performing the canonical transformation $p=-x$ and $q=y$,
so that now we would like to understand the zero energy level of:
\begin{equation}
K=\frac{1}{2}(|x|^2+1)|y|+|y|(-x_{1}y_{2}+x_{2}y_{1})+(-k-1/2)|y|-1.
\label{eq:K}
\end{equation}

The last step is to introduce stereographic projection. Using the same notation
as in \cite{moser}, we let $\xi=(\xi_{0},\xi_{1},\xi_{2})$ be a point in $\mathbb{R}^3$
with norm one, so it represents a point in $S^2$. A tangent vector $\eta\in T_{\xi}S^2$
is written as $\eta=(\eta_0,\eta_1,\eta_2)$, with inner product $(\xi,\eta)=0$. 
We shall identify $TS^2$ with $T^*S^2$ using the standard metric on $S^2$.
The transformation induced by stereographic projection is described by the equations:
\begin{align}
x_{k}&=\frac{\xi_{k}}{1-\xi_{0}},\;\;\;\;\;k=1,2,\label{str1}\\
y_{k}&=\eta_{k}(1-\xi_0)+\xi_{k}\eta_0,\;\;\;\;\;k=1,2.\label{str2}
\end{align}
This is a symplectic transformation between $T^*\R^2$ and $T^*S^2$ and we also have the
relations:
\begin{equation}
|\eta|=\frac{(|x|^2+1)|y|}{2}=\frac{|y|}{1-\xi_0}.
\label{str3}
\end{equation}
We now apply these formulas to the Hamiltonian $K$ in (\ref{eq:K}) and we find after a calculation that our system goes over the hypersurface in $T^*S^2$ given by:
\[|\eta|+(1-\xi_{0})|\eta|(\eta_{1}\xi_{2}-\eta_{2}\xi_{1})+(1-\xi_{0})|\eta|(-k-1/2)=1.\]
To make this smooth at the zero section we consider the Hamiltonian
on $T^*S^2$
\begin{equation}
Q(\xi,\eta)=\frac{1}{2}|\eta|^2[1+(1-\xi_{0})(-k-1/2+\eta_{1}\xi_{2}-\eta_{2}\xi_{1})]^2
\label{eq:Q}
\end{equation}
and we are interested in the hypersurface $\Sigma_k:=Q^{-1}(1/2)$.
In a moment we will show that the hypersurfaces $\Sigma_k$ are all of contact type
for the relevant range of energies ($k<-3/2$), but first a remark.

\begin{Remark}{\rm If we had started without the term $p_{1}q_{2}-p_{2}q_{1}$, which
corresponds to the rotating frame, 
the Hamiltonian $Q$ would be
\[Q(\xi,\eta)=\frac{1}{2}|\eta|^2[1+(1-\xi_{0})(-k-1/2)]^2.\]
For $k=-1/2$, which is the energy considered by Moser in \cite{moser}, the system
is precisely the geodesic flow of the standard metric on $S^2$. 
For $k<0$ (the relevant range of energies for the Kepler problem) we also obtain a Riemannian metric on $S^2$ given by
\[g_{k}=\frac{1}{[1+(1-\xi_{0})(-k-1/2)]^2}\;g_{st}\]
where $g_{st}$ is the standard metric on $S^2$. An elementary calculation (that we omit)
shows that the curvature of $g_k$ is constant an equal to $-2k$, so up to scaling,
$g_k$ is isometric to $g_{st}$.

}
\end{Remark}

Returning to the Hamiltonian (\ref{eq:hkr}), we rewrite it as:
\[H(q,p)=\frac{1}{2}\left((p_1+q_2)^2+(p_2-q_1)^2\right)+U(q),\]
where $U$ is the effective potential. If we let $r=|q|$ then
\[U(r)=-1/r-r^2/2.\]
The boundary of the Hill's region (see \eqref{def:Hills_region} for the definition) with energy $k$ is given by
\[k+1/r+r^2/2=0\]
so we look at the cubic
\[r^3+2kr+2=0.\]
It follows that there is a compact Hill's region iff $k\leq -3/2$.
So for $k<-3/2$ the compact Hill's region is given by
$r\leq r_0$ where $r_0(k)$ is the smallest positive root of the cubic
($r_0(-3/2)=1$). Thus $k<-3/2$ is the range of energies for which $\Sigma_k$
has a compact connected component containing the zero section. We now show:

\begin{Proposition} $\Sigma_k$ is starshaped for $k<-3/2$.
\end{Proposition}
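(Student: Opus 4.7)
The strategy is to check that the fiber-radial Liouville vector field $Y = \sum_{j=0}^{2} \eta_j \partial_{\eta_j}$ on $T^*S^2$ is everywhere transverse to $\Sigma_k$; this is exactly the starshaped condition. Introduce the abbreviation
$$
\phi := 1 + (1-\xi_0)\bigl(-k - 1/2 + \eta_1 \xi_2 - \eta_2 \xi_1\bigr),
$$
so that $Q = \tfrac12 |\eta|^2 \phi^2$. Since $Y(|\eta|^2) = 2|\eta|^2$ and the bracket in $\phi$ is linear in $\eta$, we have $Y(\phi) = (1-\xi_0)(\eta_1\xi_2 - \eta_2\xi_1)$, and therefore
$$
Y(Q) \;=\; |\eta|^2 \phi \,\Bigl[\phi + (1-\xi_0)(\eta_1\xi_2 - \eta_2\xi_1)\Bigr].
$$
What must be shown is that this is strictly positive on the compact connected component of $\{Q = 1/2\}$ that bounds a region containing the zero section (the Moser-regularized Kepler region of interest for $k < -3/2$).

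The convenient way to check positivity is to parametrize each fiber radially. Fix $(\xi, \hat\eta)$ with $|\hat\eta| = 1$ and put $\eta = r\hat\eta$, $A(\xi) := 1 + (1-\xi_0)(-k - 1/2)$ and $c(\xi,\hat\eta) := (1-\xi_0)(\hat\eta_1 \xi_2 - \hat\eta_2 \xi_1)$. Then $\phi(r) = A + rc$, and on the component where $\phi > 0$ the equation $Q = 1/2$ reduces to the quadratic $c r^2 + A r - 1 = 0$, whose smallest positive root is
$$
r_1 \;=\; \frac{2}{A + \sqrt{A^2 + 4c}}
$$
(with limit $1/A$ as $c\to 0$). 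This $r_1$ parametrizes the desired compact inner component. Using $r_1 \phi(r_1) = 1$, the formula for $Y(Q)$ simplifies along this component to
$$
Y(Q)\bigr|_{r_1} \;=\; \frac{2\sqrt{A^2+4c}}{A + \sqrt{A^2 + 4c}},
$$
so both the transversality of $Y$ and the smoothness of $r_1$ in $(\xi, \hat\eta)$ reduce to the single pointwise inequality $A^2 + 4c > 0$.

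The final step rests on the elementary estimate $|\hat\eta_1 \xi_2 - \hat\eta_2 \xi_1| \le \sqrt{\xi_1^2 + \xi_2^2} = \sqrt{1-\xi_0^2}$, which holds because $\hat\eta \perp \xi$ are unit vectors in $\R^3$. Writing $u := 1 - \xi_0 \in [0,2]$ and $a := -k - 1/2$, the required inequality reduces to the scalar statement
$$
(1 + u a)^2 \;>\; 4\, u^{3/2}\sqrt{2 - u} \qquad \text{for all } u \in [0,2],
$$
under the hypothesis $a > 1$. This is where the assumption $k < -3/2$ genuinely enters: in the borderline case $a = 1$, after squaring, $(1+u)^4 - 16 u^3 (2-u)$ factors as $(u-1)^2\bigl(17 u^2 + 6 u + 1\bigr)$, hence is non-negative on $[0,2]$ with a single double zero at $u = 1$, and raising $a$ strictly above $1$ makes the left-hand side strictly larger for every $u > 0$, turning the weak inequality into a strict one throughout $[0,2]$. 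This algebraic verification is the main technical content of the proof, and the appearance of the double zero at $u = 1$ (i.e.\ at the equator $\xi_0 = 0$) in the borderline case is the analytic trace of the fact that $k = -3/2$ is precisely the energy at which the bounded Hill region is about to close up.
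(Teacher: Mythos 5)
Your proof is correct and follows essentially the same route as the paper's: both check transversality of the radial Liouville field $\eta\,\partial_\eta$ and reduce it, via the estimate $|\hat\eta_1\xi_2-\hat\eta_2\xi_1|\le\sqrt{1-\xi_0^2}$ (which in the paper appears as $|\eta|\ge|\eta_1|$ after a rotational symmetry reduction to $\xi_1=0$), to the very same scalar inequality $(1+ua)^2>4u^{3/2}\sqrt{2-u}$, i.e.\ the paper's claim $g_a>1$ for $a>1$. Your closed-form $Y(Q)|_{\Sigma_k}=2\sqrt{A^2+4c}\big/\bigl(A+\sqrt{A^2+4c}\bigr)$ and the factorization $(u-1)^2(17u^2+6u+1)$ are a tidier packaging of the paper's contradiction argument around $f'(1)=0$ and its minimization of the auxiliary function $h$, but the mathematical content is the same.
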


\begin{proof} We shall check that the Liouville vector field $X=\eta\partial_{\eta}$
is transverse to $\Sigma_k$. Since everything is rotationally symmetric about the
 $\xi_0$-axis, it is enough to consider the case of $\xi_1=0$ and $\xi_0^2+\xi_2^2=1$.
So we look at $\Sigma_k$ over these points only:
\begin{equation}
|\eta|[1+(1-\xi_{0})(a+\eta_{1}\xi_{2})]=1
\label{eq:sigma}
\end{equation}
where $a:=-1/2-k$. Now for $\lambda$ near $1$ we let
\[f(\lambda):=\lambda|\eta|[1+(1-\xi_{0})(a+\lambda\eta_{1}\xi_{2}).\]
We wish to check that $f'(1)\neq 0$ for points satisfying (\ref{eq:sigma}).
After a calculation we find
\[f'(1)=\frac{1+(1-\xi_0)(a+2\eta_{1}\xi_2)}{1+(1-\xi_0)(a+\eta_{1}\xi_2)}.\]
Thus $f'(1)=0$ iff
\[\eta_{1}\xi_2=-\frac{a}{2}-\frac{1}{2(1-\xi_0)}=-\frac{1+(1-\xi_0)a}{2(1-\xi_0)}.\]
Note that at the point with $\xi_0=1$ there is nothing to check and without
loss of generality we may assume $\xi_2>0$.
Using (\ref{eq:sigma}) we see that still assuming $f'(1)=0$
\[|\eta|\left(\frac{1+(1-\xi_0)a}{2}\right)=1.\]
Clearly $|\eta|\geq |\eta_{1}|$, thus
\[\frac{(1+(1-\xi_0)a)^2}{4\xi_{2}(1-\xi_0)}\leq 1.\]
Now, if we let $\xi_{0}=\cos\theta$ and $\xi_{2}=\sin\theta$ we can rewrite
the above as
\[g_a(\theta):=\frac{(1+(1-\cos\theta)a)^2}{4\sin\theta(1-\cos\theta)}\leq 1.\]

\noindent{\bf Claim.} If $a>1$, then $g_a >1$.

\medskip

We introduce the function
$$h \colon (-1,1) \to \mathbb{R}, \quad t \mapsto
\frac{(2-t)^4}{16(1-t^2)(1-t)^2}=-\frac{(t-2)^4}{16(t+1)(t-1)^3}.$$
Note that for $\theta \in (0,\pi)$ we have
\begin{equation}\label{hg}
h(\cos\theta)=g_1(\theta)^2.
\end{equation}
The derivative of $h$ is given at $t \in (-1,1)$ by the formula
$$h'(t)=-\frac{3t(t-2)^3}{8(t+1)^2(t-1)^4}.$$
In particular, the only zero of $h'$ in $(-1,1)$ is located at
$t=0$. Because $\lim_{t \to \pm 1} h(t)=\infty$ we conclude that $h$
attains its global minimum at $t=0$. Since $h(0)=1$ we deduce
\begin{equation}\label{hg2}
h \geq 1.
\end{equation}
Combining (\ref{hg}) and (\ref{hg2}) we conclude that
$$g_1 \geq 1.$$
Since for $a>1$ it holds that $g_a>g_1$ the assertion of the Claim
follows.

It follows that for $k<-3/2$, $f'(1)\neq 0$ and we have transversality
of the Liouville vector field with $\Sigma_k$.

\end{proof}

\section{The circular planar restricted 3-body problem}

Assume that we have two massive bodies called the primaries, which
we will denote as Earth and Moon, which propagate according to
Newton's law of gravitation on circles around their common center of
mass. If $m_E$ is the mass of the earth and $m_M$ is the mass of the
moon we denote by
$$\mu=\frac{m_M}{m_E+m_M} \in [0,1]$$
the proportion of the mass of the moon on the total mass. In the
following we normalize our problem such that the total mass
satisfies
$$m_E+m_M=1.$$
In an inertial plane spanned by the earth and the moon the position
of the earth becomes
$$E(t)=\big(\mu \cos t, \mu \sin t\big)$$
and the position of the moon
$$M(t)=\big(-(1-\mu)\cos t,-(1-\mu)\sin t\big).$$
Suppose further that $S$ is a third massless body moving in the
plane spanned by the earth and moon on which the gravitational force
of the two primaries act. The body $S$ is referred to as the
satellite. Since the satellite is massless it does not influence the
two primaries. For $t \in \mathbb{R}$ the time-dependent Hamiltonian
$$H^i_t \colon \big(\mathbb{R}^2
\setminus \{E(t),M(t)\}\big)\times \mathbb{R}^2 \to \mathbb{R}$$ for
the satellite is given at  $(q,p) \in \big(\mathbb{R}^2 \setminus
\{E(t),M(t)\}\big)\times \mathbb{R}^2$ by
$$H^i_t(q,p)=\frac{1}{2}|p|^2-\frac{1-\mu}{|q-E(t)|}-
\frac{\mu}{|q-M(t)|},$$ i.e.~the sum of the kinetic and potential
energy of the satellite. Here the superscript $i$ indicates that we
consider an inertial coordinate system. Instead of considering an
inertial coordinate system we can alternatively also study the
propagation of the satellite in moving or synodical coordinates
where the earth and moon are at rest. We choose the coordinates in
such way that the position of the earth and moon satisfy
$$E=\big(\mu,0\big), \quad M=\big(-(1-\mu),0\big).$$
The Hamiltonian in the synodical coordinate system for the satellite
is given by
$$H \colon \big(\mathbb{R}^2 \setminus \{E,M\}\big) \times
\mathbb{R}^2 \to \mathbb{R}$$ satisfying
\begin{equation}
H(q,p)=\frac{1}{2}|p|^2-\frac{1-\mu}{|q-E|}-
\frac{\mu}{|q-M|}+p_1q_2-p_2q_1.
\label{mh}
\end{equation}
The transition from the inertial
coordinate system to the moving coordinate system requires a
time-dependent transformation and we refer to \cite{abraham-marsden}
for a derivation of it. It is an amazing fact that in the rotating
coordinate system the Hamiltonian becomes autonomous,
i.e.~independent of time. In particular, the Hamiltonian $H$ is
preserved under the flow. This observation goes back to Jacobi and
the integral $-2H$ is traditionally called the Jacobi integral.

We further note that the Hamiltonian in the synodical coordinate system
is not anymore of the simple form kinetic plus potential energy but
contains the third term $p_1q_2-p_2q_1$ which is the moment map of
the $S^1$-action on the cotangent bundle of $\mathbb{R}^2$ induced
from rotations of the base. One might think of this term as
``rotational energy" governing the centrifugal and Coriolis forces of
the rotating system. In the case where the Hamiltonian is just the
sum of kinetic and potential energy it is well-known that energy
hypersurfaces are of contact type. However, because of the rotational term
the question of whether energy hypersufaces of the Hamiltonian $H$ are
contact is far from obvious. Indeed, formally the rotational forces
are nothing but the Lorentz force of a magnetic field (after switching to the 
effective potential, see below)
and we refer to \cite{CFP,CMP} for examples of energy hypersurfaces which are not
contact in the presence of a magnetic field.

If we complete the squares in the formula for the Hamiltonian $H$ we
end up with the expression
$$H(q,p)=\frac{1}{2}\Big((p_1+q_2)^2+(p_2-q_1)^2\Big)
-\frac{1-\mu}{|q-E|}-\frac{\mu}{|q-M|}-\frac{1}{2}|q|^2.$$ The
function
$$U \colon \mathbb{R}^2 \setminus \{E,M\} \to \mathbb{R}$$
given by
$$U(q)=-\frac{1-\mu}{|q-E|}-\frac{\mu}{|q-M|}-\frac{1}{2}|q|^2$$
is called the \emph{effective potential}. So we obtain the following
condensed formula for the Hamiltonian
\begin{equation}\label{hami}
H(q,p)=\frac{1}{2}\Big((p_1+q_2)^2+(p_2-q_1)^2\Big)+U(q).
\end{equation}
This expression of the Hamiltonian shows clearly the presence of 
an exact magnetic field with primitive given by the 1-form $q_{2}dq_1-q_{1}dq_2$.

The effective potential $U$ has five critical points
$(\ell^1,\ldots, \ell^5)$ called \emph{Lagrangian points}. The first
three critical points, which were already discovered by Euler, are
called collinear Lagrangian points since they lie on the axis
spanned by earth and moon. The first critical point $\ell^1$ lies in
between earth and moon, $\ell^2$ on the opposite side of the moon
and $\ell^3$ on the opposite side of the earth. The three collinear
Lagrange points are saddle points of the effective potential. The
two maxima of $U$ lie at $\ell^4$ and $\ell^5$ which are called the
triangular Lagrange points since both of them span together with the
two primaries an equilateral triangle. Note that if $R \colon
\mathbb{R}^2 \to \mathbb{R}^2$ is the reflection along the first
axis then $R(\ell^4)=\ell^5$ which is obvious from the fact that the
effective potential is invariant under $R$. We remark that the
triangular Lagrangian points are also sometimes referred to as
Trojan points since in the Sun-Jupiter system they correspond to the
locations of the Trojan asteroids.

There is a one-to-one correspondence between critical points of the
effective potential and critical points of the Hamiltonian. Namely
for $j \in \{1, \ldots, 5\}$ abbreviate
$$L_j=(\ell^j_1,\ell^j_2,-\ell^j_2,\ell^j_1) \in \mathbb{R}^4.$$
Then $L_1, \ldots, L_5$ are the five critical points of $H$. In case
where the mass of the moon is less than the mass of the earth,
i.e.~$\mu <\frac{1}{2}$ the critical points are ordered by action in
the following way
$$H(L_1)<H(L_2)<H(L_3)<H(L_4)=H(L_5).$$
If $c \in \mathbb{R}$ we abbreviate the energy hypersurface for the
Hamiltonian $H$ at energy level $c$ by
$$\Sigma_c=H^{-1}(c).$$
We further abbreviate by
$$\pi \colon \big(\mathbb{R}^2 \setminus \{E,M\}\big) \times
\mathbb{R}^2 \to \mathbb{R}^2 \setminus \{E,M\}$$ the projection
along $\mathbb{R}^2$. The \emph{Hill's region} is defined as
\begin{equation}\label{def:Hills_region}
\mathcal{K}_c=\pi(\Sigma_c) \subset \mathbb{R}^2 \setminus\{E,M\}.
\end{equation}
This is the region in position space where the satellite
of energy $c$ is allowed to stay. Note that since the first two
terms in (\ref{hami}) can attain in each fiber arbitrary nonnegative
values, we can alternatively define the Hill's region as
\begin{equation}\label{hire}
\mathcal{K}_c=\big\{q \in \mathbb{R}^2 \setminus \{E,M\}: U(q) \leq
c\big\}.
\end{equation}
 If $c<H(L_1)$, then $\mathcal{K}_c$ contains three
connected components, an unbounded one and two bounded ones in whose
closure lies the earth respectively the moon. We denote the bounded
components by $\mathcal{K}_c^E$ and $\mathcal{K}^M_c$ such that
$$E \in \mathrm{cl}(\mathcal{K}^E_c), \quad M \in
\mathrm{cl}(\mathcal{K}^M_c).$$ We further abbreviate by
$$\Sigma^E_c =\pi^{-1}\big(\mathcal{K}^E_c\big)\cap \Sigma_c, \quad
\Sigma^M_c=\pi^{-1}\big(\mathcal{K}^M_c\big)\cap\Sigma_c$$ the corresponding
connected components of the energy hypersurface $\Sigma_c$.

\section{Below the first critical level}
\label{section:below}
Assume that $c<H(L_1)$ and recall that $\Sigma^M_c \subset
H^{-1}(c)$ is the connected component of the energy hypersurface
around the moon. We introduce the Liouville vector field
\begin{equation}
\label{eq:Liouville_field_unregularized}
X=(q-M)\frac{\partial}{\partial q}.
\end{equation}
The main result of this section is
\begin{Proposition}\label{transverse}
$X$ intersects $\Sigma^M_c$ transversally.
\end{Proposition}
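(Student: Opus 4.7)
The plan is to reduce transversality to a pointwise inequality on the Hill's region $\mathcal{K}^M_c$ and then to verify that inequality using the explicit form of $U$ near the moon.

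First I would compute $dH(X)$. Since $X$ has no $\partial_p$-component, $dH(X) = (q-M)\cdot \nabla_q H$. Expanding $H$ via (\ref{hami}) and writing $v := (p_1+q_2,\, p_2-q_1)$, the kinetic piece contributes $(q_2-M_2)v_1 - (q_1-M_1)v_2 = -(q-M)\times v$ (with $\times$ the scalar cross product on $\mathbb{R}^2$), and the potential piece contributes $(q-M)\cdot \nabla U$, so
$$ dH(X) = (q-M)\cdot \nabla U(q) - (q-M)\times v. $$
On $\Sigma_c$ we have $|v|^2 = 2(c-U(q))$, so $|(q-M)\times v| \leq |q-M|\sqrt{2(c-U(q))}$ by Cauchy--Schwarz. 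Setting $\rho := |q-M|$ and writing $U_r := \frac{q-M}{\rho}\cdot \nabla U$ for the radial derivative of $U$ from $M$, transversality is implied by the purely configuration-space inequality
$$ U_r(q)^2 > 2\bigl(c-U(q)\bigr), \qquad q \in \mathcal{K}^M_c. $$

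The natural way to establish this is to prove the $c$-independent estimate $W(q) \geq H(L_1)$ on the moon component of $\{U \leq H(L_1)\}$, where $W := U + \tfrac{1}{2} U_r^2$. For any $c < H(L_1)$ and $q \in \mathcal{K}^M_c$, one then obtains $W(q) \geq H(L_1) > c$, which is equivalent to the desired inequality. Near $M$, decomposing $U = -\mu/\rho + V$ with $V(q) := -(1-\mu)/|q-E| - \tfrac{1}{2}|q|^2$ gives $U_r \sim \mu/\rho^2$ and $W\to +\infty$; the smooth remainder $V$ is well-controlled because $\mathcal{K}^M_c$ is compact and bounded away from $E$ (since $c < H(L_1)$). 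At the saddle $\ell^1$, $W(\ell^1) = U(\ell^1) = H(L_1)$ since $\nabla U(\ell^1) = 0$, and a Taylor expansion (using that the negative-eigenvalue direction of the Hessian of $U$ at $\ell^1$ is aligned with the ray from $M$, with the corresponding eigenvalue of absolute value strictly greater than $1$) shows $W > H(L_1)$ in a punctured neighbourhood of $\ell^1$ on the moon side.

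The main obstacle is the intermediate regime, between the Kepler-type singularity at $M$ and the Lagrange point $\ell^1$. Here one has to produce an explicit quantitative bound on $W$ from the formula for $U$, ruling out any point in the moon component where $W < H(L_1)$. The role of the hypothesis $c < H(L_1)$ is precisely to confine the region where the inequality must hold to a compact set on which the smooth remainder $V$ and its gradient are uniformly bounded, and to exclude the only critical point of $U$ (namely $\ell^1$) where $W = H(L_1)$ could conceivably be violated.
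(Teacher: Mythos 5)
Your reduction is exactly the paper's: in lunar polar coordinates $X=\rho\,\partial_\rho$, the Cauchy--Schwarz estimate gives $X(H)|_{\Sigma^M_c}\ge \rho\big(\partial_\rho U-\sqrt{2(c-U)}\big)$, and the problem becomes the configuration-space inequality $(\partial_\rho U)^2>2(c-U)$ on $\mathcal{K}^M_c$, i.e.\ your $W=U+\tfrac12 U_r^2>c$. Your $c$-uniform target $W\ge H(L_1)$ on the moon component of $\{U\le H(L_1)\}$ is the right statement, and your two boundary analyses are correct: $W\to+\infty$ at $M$, and at $\ell^1$ the Hessian of $U$ is $\mathrm{diag}(-2\rho_C,\rho_C)$ with the negative eigendirection radial from $M$ and $\rho_C>1$, so $W-H(L_1)\approx(2\rho_C^2-\rho_C)s^2+\tfrac12\rho_C t^2>0$ in a punctured neighbourhood.

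The gap is that you stop exactly where the work begins: the ``intermediate regime'' you defer to ``an explicit quantitative bound'' is the entire content of the proposition, and no soft compactness argument can supply it because $W=H(L_1)$ is actually attained at $\ell^1$ on the closure of the region, so there is no uniform margin to exploit. The paper's mechanism avoids estimating $W$ directly: (i) the angular function $U(\rho,\cdot)$ is minimal at $\theta=0$, which forces $\mathcal{K}^M_c\subset B_d(M)$ with $d=|\ell^1-M|$; (ii) $\partial_\rho U>0$ on $B_d(M)\setminus\{M,\ell^1\}$; (iii) $\partial^2_\rho U\le -1$ there. Then for $q=(\rho,\theta)\in\mathcal{K}^M_c$ one chooses $\tau$ with $U(\rho+\tau,\theta)=c$ and integrates $\frac{d}{dt}(\partial_\rho U)^2=2\,\partial_\rho U\,\partial^2_\rho U\le -2\,\partial_\rho U$ along the radial segment to obtain $(\partial_\rho U)^2>2(c-U)$. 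Each of (i)--(iii) is a nontrivial calculus lemma (the last occupies the appendix), and until you prove statements of comparable strength, or exhibit some other uniform lower bound for $W$ on the bulk of the region, your argument is incomplete. A minor further point: $U_r^2>2(c-U)$ alone gives $dH(X)\neq 0$, but for the Liouville/contact application one also wants the consistent sign $U_r>0$, which is item (ii) above and is not delivered by a bound on $W$ alone.
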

To prove the proposition we need three Lemmas. The Lemmas need several calculations 
the details of which are relegated to the Appendix. To formulate the
first Lemma we make the convention that $(\rho,\theta)$ are lunar
polar coordinates, i.e.~polar coordinates centered at the moon, such
that the effective potential becomes
$$U(\rho,\theta)=-\frac{\mu}{\rho}-\frac{1-\mu}{\sqrt{\rho^2-2\rho \cos
\theta+1}}-\frac{1}{2}\rho^2+\rho\cos\theta(1-\mu)-\frac{1}{2}(1-\mu)^2.$$
For $\rho \in (0,1)$ we define
$$U_\rho=U(\rho,\cdot) \in C^\infty(S^1,\mathbb{R}).$$
The first Lemma we need is the following assertion.
\begin{Lemma}\label{tra1}
For $\rho \in (0,1)$ the function $U_\rho$ attains its minimum at
$\theta=0$.
\end{Lemma}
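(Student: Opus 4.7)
The plan is to reduce the problem to a single-variable analysis and then carry out a critical-point classification. With $\rho \in (0,1)$ fixed, the only $\theta$-dependent terms in $U_\rho(\theta)$ are $-(1-\mu)(\rho^2-2\rho\cos\theta+1)^{-1/2}$ and $(1-\mu)\rho\cos\theta$. Since $1-\mu > 0$, minimizing $U_\rho$ over $\theta$ is equivalent to minimizing
$$g(\theta) := \rho\cos\theta - \frac{1}{\sqrt{\rho^2 - 2\rho\cos\theta + 1}}.$$

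First I would compute
$$g'(\theta) = -\rho\sin\theta\,\Big[\,1 - (\rho^2 - 2\rho\cos\theta + 1)^{-3/2}\,\Big]$$
and locate the critical points on $S^1$. The factor $\sin\theta$ vanishes at $\theta = 0,\pi$, while the bracket vanishes precisely when $\rho^2 - 2\rho\cos\theta + 1 = 1$, i.e.\ when $\cos\theta = \rho/2$. So there are four critical points: $0$, $\pi$, and $\pm\arccos(\rho/2)$.

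Next I would classify them through a sign analysis of $g'$ on $(0,\pi)$, using the evenness $g(-\theta)=g(\theta)$ to extend the conclusion to $(-\pi,0)$. On $(0,\pi)$ the factor $-\sin\theta$ is negative, and the bracket changes sign exactly where $\cos\theta = \rho/2$: it is negative near $\theta = 0$ (where $\rho^2 - 2\rho\cos\theta + 1 \approx (1-\rho)^2 < 1$) and positive near $\theta = \pi$ (where it is close to $(1+\rho)^2 > 1$). Hence $g$ is increasing on $(0,\arccos(\rho/2))$ and decreasing on $(\arccos(\rho/2),\pi)$, so that $\theta = 0$ and $\theta = \pi$ are local minima while $\theta = \pm\arccos(\rho/2)$ are local maxima.

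The remaining step — and the substantive content of the lemma — is the comparison $g(0) < g(\pi)$. Using $\sqrt{\rho^2 - 2\rho + 1} = 1-\rho$ and $\sqrt{\rho^2 + 2\rho + 1} = 1+\rho$, valid because $0 < \rho < 1$, a short computation gives
$$g(0) - g(\pi) = 2\rho + \frac{1}{1+\rho} - \frac{1}{1-\rho} = -\frac{2\rho^3}{1-\rho^2} < 0,$$
which pins $\theta = 0$ as the unique global minimum. The only pitfall I anticipate is the sign tracking of the bracket factor, which pivots on whether the squared distance to the earth, $\rho^2 - 2\rho\cos\theta + 1$, is smaller or larger than $1$; once this dichotomy is handled the rest is bookkeeping and a single algebraic identity.
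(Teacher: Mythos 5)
Your proof is correct and follows essentially the same route as the paper: compute $U_\rho'$, identify the critical points at $\theta=0,\pi$ and at $\cos\theta=\rho/2$, rule out the latter as minima, and conclude by the comparison $U_\rho(0)-U_\rho(\pi)=-\tfrac{2(1-\mu)\rho^3}{1-\rho^2}<0$. The only (immaterial) differences are that you strip off the constant $(1-\mu)$ and the $\theta$-independent terms first, and classify the critical points by a sign analysis of the first derivative rather than by the second-derivative test used in the paper.
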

\begin{proof} The derivative of $U_\rho$ is given by
$$U'_\rho(\theta)=(1-\mu)\rho\sin\theta
\bigg(\frac{1}{\big(\rho^2-2\rho\cos\theta+1\big)^{\frac{3}{2}}}-1\bigg).$$
We see that $\theta=0$ and $\theta=\pi$ are critical points of
$U_\rho$. The second derivative at these points is given by
$$U''_\rho(0)=(1-\mu)\bigg(\frac{1}{(1-\rho)^3}-1\bigg)>0$$
respectively
$$U''_\rho(\pi)=-(1-\mu)\bigg(\frac{1}{(1+\rho)^3}-1\bigg)>0.$$
We conclude that $U_\rho$ attains a local minimum at $0$ and $\pi$.
The remaining critical points of $U_\rho$ necessarily satisfy the
condition $\cos\theta=\frac{\rho}{2}$. In the two arcs bounded by
the minima there is precisely one such critical point, hence it has
to be a maximum. Therefore the only local minima lie at $\theta=0$
and $\theta=\pi$. We compare the values of $U_\rho$ at them and
obtain
$$
U_\rho(0)-U_\rho(\pi)=-\frac{2(1-\mu)\rho^3}{(1-\rho)(1+\rho)}<0.
$$
We conclude that $U_\rho$ attains its global minimum at $\theta=0$.
\end{proof}
Abbreviate
$$d:=|\ell^1-M|$$
the distance from the moon to the first Lagrangian point, i.e. the
critical point of the effective potential which lies on the
earth-moon axis between the two primaries. We further introduce
$$B=\{q \in \mathbb{R}^2: |q-M| \leq d\}$$
the ball centered at the moon of radius $d$. In particular, $\ell^1$
lies on the boundary of $B$. As a corollary of the previous Lemma we
obtain the following bound on the component of the Hill's region
around the moon. We denote by $\mathrm{int}$ the interior of a set.

\begin{Corollary}\label{cortra}
$\mathcal{K}^M_c \subset \mathrm{int}(B)$.
\end{Corollary}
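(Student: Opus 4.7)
The plan is to combine Lemma~\ref{tra1} with a connectedness argument: first show that $U>c$ on the sphere $\partial B=\{\rho=d\}$, then conclude that the bounded lunar component $\mathcal{K}^M_c$ cannot escape $\mathrm{int}(B)$ because it already contains points arbitrarily close to $M$.

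The first step is to identify $\ell^1$ in lunar polar coordinates. Since $\ell^1$ lies on the earth-moon axis strictly between the two primaries, which are at distance $1$ apart in the chosen normalization, we have $d=|\ell^1-M|\in(0,1)$ and $\ell^1$ corresponds to $(\rho,\theta)=(d,0)$. At the critical point $L_1$ the kinetic-type term $(p_1+q_2)^2+(p_2-q_1)^2$ vanishes, so $H(L_1)=U(\ell^1)$, and the hypothesis $c<H(L_1)$ becomes $c<U(d,0)$.

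Next I would control $U$ on the circle $\partial B$. Applying Lemma~\ref{tra1} with $\rho=d\in(0,1)$ gives
\[
U(d,\theta)\geq U_d(0)=U(\ell^1)>c \quad\text{for all }\theta\in S^1,
\]
hence $\partial B\cap\mathcal{K}_c=\emptyset$.

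The final step is topological. Since $|E-M|=1>d$, the primary $E$ lies outside $B$, so $\partial B$ separates the punctured plane $\mathbb{R}^2\setminus\{E,M\}$ into the punctured open disk $\mathrm{int}(B)\setminus\{M\}$ and an unbounded exterior piece. The connected set $\mathcal{K}^M_c$ is disjoint from $\partial B$, so it lies entirely in one of these two open pieces. Because $U(q)\to-\infty$ as $q\to M$, a sufficiently small punctured disk around $M$ is contained in $\mathcal{K}_c$; being connected and accumulating at $M$, this punctured disk lies inside the component $\mathcal{K}^M_c$. As this disk is contained in $\mathrm{int}(B)$, the whole component must be, giving $\mathcal{K}^M_c\subset\mathrm{int}(B)$.

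The only mildly delicate point is verifying the topological separation, namely that $\partial B$ actually disconnects $\mathbb{R}^2\setminus\{E,M\}$ in the desired way and that a suitable punctured neighborhood of $M$ seeds the lunar component; both are immediate once we record $d<1$ and the singular behaviour of $U$ at $M$. Beyond Lemma~\ref{tra1} the argument requires no new analytic input.
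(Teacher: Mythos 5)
Your proposal is correct and follows essentially the same route as the paper: both establish $U(d,\theta)\geq U_d(0)=U(\ell^1)=H(L_1)>c$ via Lemma~\ref{tra1} so that $\mathcal{K}_c\cap\partial B=\emptyset$, and then conclude by a connectedness argument using that $M$ lies in the closure of $\mathcal{K}^M_c$. The only cosmetic difference is that you re-derive the seeding of the lunar component from $U\to-\infty$ near $M$, whereas the paper simply invokes $M\in\mathrm{cl}(\mathcal{K}^M_c)$ from the definition of $\mathcal{K}^M_c$.
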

\begin{proof}
We first show that
\begin{equation}\label{hill}
\mathcal{K}_c \cap \partial B=\emptyset.
\end{equation}
To see that pick $(d,\theta) \in \partial B$. We estimate using
Lemma~\ref{tra1}
\begin{equation}\label{boundary}
U(d,\theta)=U_d(\theta) \geq U_d(0)=U(\ell^1)=H(L^1)>c.
\end{equation}
By the description of the Hill's region given in (\ref{hire}) we see
that $(\rho,\theta)$ does not lie in $\mathcal{K}_c$ and therefore
(\ref{hill}) follows. We deduce that both sets $\mathcal{K}^M_c \cap
B$ and $\mathcal{K}^M_c \cap B^c$ are open and closed. Since $M$
lies in the closure of $\mathcal{K}^M_c$, the set $\mathcal{K}^M_c
\cap B$ is nonempty. Hence, since $\mathcal{K}^M_c$ is connected, we
conclude that $\mathcal{K}^M_c \cap B^c=\emptyset$. This finishes
the proof of the Corollary.
\end{proof}
The next Lemma we need in order to prove
Proposition~\ref{transverse} is the following assertion.
\begin{Lemma}\label{tra2}
For every $q \in B-\{M,\ell^1\}$ it holds that $\frac{\partial
U}{\partial \rho}(q) > 0$.
\end{Lemma}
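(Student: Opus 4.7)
Working in lunar polar coordinates $(\rho,\theta)$ centred at the moon and using the reflection symmetry $\theta\leftrightarrow -\theta$, the claim reduces to showing that
$$V(\rho,\theta):=\frac{\partial U}{\partial\rho}(\rho,\theta)=\frac{\mu}{\rho^2}+\frac{(1-\mu)(\rho-\cos\theta)}{(\rho^2-2\rho\cos\theta+1)^{3/2}}-\rho+(1-\mu)\cos\theta$$
is strictly positive on $(0,d]\times[0,\pi]$ except at the point with polar coordinates $(d,0)$, which is $\ell^1$.

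The strategy is to reduce positivity on the whole ball to positivity along the two rays $\theta=0$ and $\theta=\pi$. I would fix $\rho\in(0,d]$ and differentiate in $\theta$ to obtain
$$\frac{\partial V}{\partial\theta}=(1-\mu)\sin\theta\,\Bigl[\frac{1+\rho\cos\theta-2\rho^2}{(\rho^2-2\rho\cos\theta+1)^{5/2}}-1\Bigr];$$
a direct substitution shows that for every $\rho\in(0,1)$ the bracket is strictly positive at $\theta=0$ and strictly negative at $\theta=\pi$. Writing $c=\cos\theta$, a further differentiation shows the bracket is strictly increasing in $c$ when $\rho\leq 2/3$ and, for $\rho\in(2/3,1)$, unimodal in $c$ with a unique interior minimum at $c^*=(3\rho^2-2)/\rho$, at which a short closed-form computation gives a manifestly negative value. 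In either case the bracket has exactly one zero in $(-1,1)$, so $V(\rho,\cdot)$ is strictly increasing from $\theta=0$ up to an interior maximum and then strictly decreasing back down to $\theta=\pi$; its minimum on $[0,\pi]$ is therefore attained at one of the two endpoints.

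It remains to check positivity at these endpoints for $\rho\in(0,d]$. The function
$$V(\rho,\pi)=\frac{\mu}{\rho^2}+\frac{1-\mu}{(1+\rho)^2}-\rho-(1-\mu)$$
is manifestly strictly decreasing in $\rho$, so positivity on $(0,d]$ reduces to $V(d,\pi)>0$; using the defining relation $V(d,0)=0$ of $\ell^1$ to eliminate $\mu/d^2$, this reduces to the elementary inequality $\tfrac{1}{(1-d)^2}+\tfrac{1}{(1+d)^2}>2$, which holds for every $d\in(0,1)$. Likewise,
$$V(\rho,0)=\frac{\mu}{\rho^2}-\frac{1-\mu}{(1-\rho)^2}-\rho+(1-\mu)$$
is a sum of terms each strictly decreasing in $\rho$, tends to $+\infty$ as $\rho\to 0^+$, and vanishes at $\rho=d$ by the very definition of $\ell^1$; hence it is strictly positive on $(0,d)$ and zero only at $\rho=d$.

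The hardest part will be the unimodality analysis of the bracket in $\partial V/\partial\theta$ when $\rho\in(2/3,d]$, a range that is nonempty precisely when $\mu$ is close to $1$; there the naive monotonicity in $c=\cos\theta$ fails and one must locate the critical point $c^*$ of the bracket and verify that its minimum value is still strictly negative. Once this single-variable sign analysis is in hand, the rest of the argument is routine one-variable calculus together with the elementary endpoint inequalities above.
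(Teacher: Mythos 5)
Your proof is correct and follows essentially the same strategy as the paper's: fix $\rho$, study $V_\rho(\theta)=\frac{\partial U}{\partial\rho}(\rho,\theta)$, use the sign of the bracket in $\partial V/\partial\theta$ to show that the minimum over $\theta\in[0,\pi]$ is attained at $\theta=0$ or $\theta=\pi$, and then check the two rays. Two differences are worth recording. First, the paper asserts that $f(\tau)=\frac{1+\rho\tau-2\rho^2}{(\rho^2-2\rho\tau+1)^{5/2}}-1$ is strictly monotone on $[-1,1]$; as you observe, this fails for $\rho>2/3$, since $f'(\tau)=\frac{3\rho\,(2-3\rho^2+\rho\tau)}{(\rho^2-2\rho\tau+1)^{7/2}}$ changes sign at $\tau=c^*=(3\rho^2-2)/\rho\in(-1,1)$ there. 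Your unimodality analysis repairs this small inaccuracy, and in fact the ``hardest part'' you anticipate is automatic: since $f$ decreases on $[-1,c^*]$, its interior minimum value lies below $f(-1)<0$, so the unique zero of $f$ in $(-1,1)$ --- which is all that either argument uses --- survives with no closed-form evaluation at $c^*$. Second, where the paper identifies $\theta=0$ as the global minimizer of $V_\rho$ by computing $V_\rho(0)-V_\rho(\pi)<0$ and then only needs positivity along the ray $\theta=0$ (via concavity of $u=U(\cdot,0)$), you verify positivity at both endpoints separately; your check at $\theta=\pi$ rests on the same elementary inequality $\frac{1}{(1-d)^2}+\frac{1}{(1+d)^2}>2$ that underlies the paper's endpoint comparison, so the two routes are computationally equivalent and both close the argument.
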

\begin{proof}
We prove the Lemma in three steps.
\\ \\
\textbf{Step~1: }\emph{If $\rho \in (0,d)$, then $\frac{\partial
U}{\partial \rho}(\rho,0)>0$, i.e.~the assertion of the Lemma holds
on the earth-moon axis between the moon and the first Lagrange
point.}
\\ \\
We consider the function $u \in C^\infty\big((0,1),\mathbb{R}\big)$
given by
$$u(\rho)=U(\rho,0),\quad \rho \in [0,\infty).$$
Explicitly, the function $u$ is given by
\begin{equation}\label{fuu}
u(\rho)=-\frac{\mu}{\rho}-
\frac{1-\mu}{1-\rho}-\frac{1}{2}(\rho-1+\mu)^2.
\end{equation}
Its second derivative satisfies for $\rho \in (0,1)$
$$u''(\rho)=-\frac{2\mu}{\rho^3}-\frac{2(1-\mu)}{(1-\rho)^3}-1<0.$$
In particular, $u$ is strictly concave. Since the Lagrange point is
a critical point of $U$, we have $u'(d)=0$, from which the assertion
of Step~1 follows.
\\ \\
\textbf{Step~2: }\emph{For $\rho \in (0,1)$ let $V_\rho \in
C^\infty(S^1,\mathbb{R})$ be the function which is given for $\theta
\in S^1$ by  $V_\rho(\theta)=\frac{\partial U}{\partial
\rho}(\rho,\theta)$. Then $V_\rho$ attains its unique minimum at
$\theta=0$.}
\\ \\
We first compute the derivative of $V_\rho$ to be
$$V'_\rho(\theta)=
\frac{\partial^2 U}{\partial \rho \partial \theta}(\rho,\theta)
=(1-\mu)\sin\theta\Bigg(\frac{-2\rho^2+\rho\cos\theta+1}
{\big(\rho^2-2\rho\cos\theta+1\big)^{\frac{5}{2}}}-1\Bigg).$$ We
first examine the two critical points at $\theta=0$ and
$\theta=\pi$. The second derivatives at these points compute to be
$$V''_\rho(0)=(1-\mu)\Bigg(\frac{-2\rho^2+\rho+1}{(1-\rho)^5}-1\Bigg)
=(1-\mu)\Bigg(\frac{2\rho+1}{1-\rho)^4}-1\Bigg)>0$$ and
$$V''_\rho(\pi)=-(1-\mu)\Bigg(\frac{-2\rho^2-\rho+1}{(1+\rho)^5}-1\Bigg)>0$$
so that we conclude that $V_\rho$ has strict local minima at
$\theta=0$ and $\theta=\pi$.

We next show that there are no other local minima of $V_\rho$ than
$\theta=0$ and $\theta=\pi$. For this purpose we have a closer look
at the function $f \in C^\infty([-1,1],\mathbb{R})$ given by
$$f(\tau)=\frac{-2\rho^2+\rho\tau+1}
{\big(\rho^2-2\rho\tau+1\big)^{\frac{5}{2}}}-1,\quad \tau \in
[0,1].$$ As we just seen above we have
$$f(1)>0, \quad f(-1)<0.$$
Moreover, the function $f$ is strictly monotone. Hence there exists
a unique $\tau_0 \in (-1,1)$ such that $f(\tau_0)=0$. Critical
points of $V_\rho$ different from $\theta=0$ or $\theta=\pi$ are
precisely points $\theta \in S^1$ which meet the condition $\cos
\theta=\tau_0$. Hence there is precisely one critical point in the
interval $(0,\pi)$ and $(\pi, 2\pi)$. Since $V_\rho$ has local
minima at $\theta=0$ and $\theta=\pi$ we conclude that these other
critical points correspond to local maxima. Hence $\theta=0$ and
$\theta=\pi$ are the only local minima.

To determine the global minimum of $V_\rho$ it suffices now to
compare the values at $\theta=0$ and $\theta=\pi$. We obtain
$$V_\rho(0)-V_\rho(\pi)=
2(1-\mu)\frac{\rho^4-3\rho^2}{(1-\rho)^2(1+\rho)^2}<0.$$ Hence
$\theta=0$ is the unique global minimum for $V_\rho$.
\\ \\
\textbf{Step~3: }\emph{Proof of the Lemma.}
\\ \\
By Step~1 it suffices to show the Lemma for $\theta \neq 0$. Hence
using Step~1 again together with Step~2 we get for $\rho \in (0,d]$
and $\theta \in S^1$ the estimate
$$\frac{\partial U}{\partial \rho}(\rho,\theta)=V_\rho(\theta)
> V_\rho(0)=\frac{\partial U}{\partial \rho}(\rho,0) \geq 0.$$ This
finishes the proof of the Lemma.
\end{proof}
We continue to denote by $B$ the ball whose radius is the distance
from the moon to the first Lagrange point and which is centered at
the moon.
\begin{Lemma}\label{tra3}
For each $q \in B-\{M\}$, it holds that $\frac{\partial^2
U}{\partial \rho^2}(q) \leq -1$.
\end{Lemma}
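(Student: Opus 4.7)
My plan is to compute $\partial^2 U/\partial\rho^2$ explicitly and reduce the claimed bound to a single scalar inequality. Writing $r := \sqrt{\rho^2 - 2\rho\cos\theta + 1}$ for the earth--satellite distance and differentiating the formula for $U$ twice, I use the algebraic identity $r^2 - (\rho-\cos\theta)^2 = \sin^2\theta$ to arrive at
$$\frac{\partial^2 U}{\partial\rho^2} = -1 - \frac{2\mu}{\rho^3} + (1-\mu)\,\frac{\sin^2\theta - 2(\rho-\cos\theta)^2}{r^5}.$$
The first two terms on the right are already strictly less than $-1$, so the lemma is equivalent to the scalar inequality
$$(1-\mu)\,\frac{\sin^2\theta - 2(\rho-\cos\theta)^2}{r^5} \leq \frac{2\mu}{\rho^3}. \qquad (\ast)$$

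I would dispatch $(\ast)$ by a case analysis on the sign of the numerator on the left. When $\sin^2\theta \leq 2(\rho-\cos\theta)^2$ the left-hand side of $(\ast)$ is non-positive and the inequality is automatic; this covers in particular the earth--moon axis $\sin\theta = 0$, where the computation reduces exactly to the concavity already established in Step~1 of Lemma~\ref{tra2}. It remains to treat the complementary regime $\sin^2\theta > 2(\rho-\cos\theta)^2$, in which the lunar radial direction $\partial_\rho$ is close to perpendicular to the earth--satellite vector and the earth--tidal contribution to $\partial^2_\rho U$ is genuinely positive.

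The hard part is precisely this complementary regime, where $r$ may be as small as $\sqrt{1-\rho^2}$ and the moon term $2\mu/\rho^3$ has to absorb a nontrivial positive tidal excess. The crucial input is the hypothesis $q \in B$, i.e.\ $\rho \leq d < 1$: this bounds $r$ below by $1-d$ and keeps $2\mu/\rho^3$ large in just the right way. To close the estimate I would clear denominators, converting $(\ast)$ into a polynomial inequality in $\rho$, $\cos\theta$, and $\mu$, and then invoke the critical point equation $\mu/d^2 - (1-\mu)/(1-d)^2 = d - 1 + \mu$ defining $d$ (obtained from $u'(d) = 0$ in the notation of Step~1 of Lemma~\ref{tra2}) to complete the verification. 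These are the tedious but routine calculations that the authors relegate to the Appendix.
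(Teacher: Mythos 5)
Your reduction is correct and in fact coincides with the paper's: using $r^2=(\rho-\cos\theta)^2+\sin^2\theta$, your inequality $(\ast)$ is exactly the paper's assertion $W\leq 0$ for
$$W(\rho,\theta)=-\frac{2\mu}{\rho^3}-\frac{(1-\mu)\bigl(2\rho^2-4\rho\cos\theta-1+3\cos^2\theta\bigr)}{(\rho^2-2\rho\cos\theta+1)^{5/2}},$$
and your easy case (numerator non-positive, which includes the axis $\theta=0,\pi$) is dispatched correctly. The problem is that everything after "It remains to treat the complementary regime" is not a proof. The entire content of the lemma lives in that regime, and "clear denominators and verify a polynomial inequality in $\rho$, $\cos\theta$, $\mu$" does not describe an argument: after clearing denominators you still have the irrational power $r^5$, the admissible set is cut out by the implicit constraint $\rho\leq d(\mu)$ with $d(\mu)$ defined by a quintic, and you give no indication of how the verification would be organized or why it should succeed. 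Asserting that the authors relegate "these" calculations to the Appendix misreads the Appendix, which contains the computations supporting a specific three-step strategy, not a brute-force certificate for $(\ast)$.

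For comparison, the paper's route through the hard regime is: (i) for fixed $\rho$, locate the maximizing angle of $W_\rho$ by showing $\theta=0,\pi$ are local minima and solving the resulting quadratic in $\cos\theta$, getting $\cos\vartheta=\bigl(\rho^2-1+\sqrt{-\rho^4+\rho^2+1}\bigr)/\rho$; (ii) rule out an interior maximum of $W$ on $B$ by showing $\partial W/\partial\rho>0$ at any such angle, so the maximum sits on $\partial B$, i.e.\ at $\rho=d$; (iii) substitute $\rho=d$ and $\cos\vartheta$ into $W$, eliminate $\mu$ via the critical-point relation \eqref{eq:mu_Lagrange_point}, and prove the resulting one-variable function of $d\in[0,1]$ is negative --- this last step alone occupies a dedicated subsection of the Appendix. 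Your proposal contains none of (i)--(iii), and your heuristic that $\rho\leq d$ "keeps $2\mu/\rho^3$ large in just the right way" is only vindicated \emph{after} one knows the maximum occurs at $\rho=d$, which is precisely step (ii). As written, the proposal sets up the right inequality but leaves the lemma unproved.
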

\begin{proof}
We first compute
$$
\frac{\partial^2 U}{\partial \rho^2}(\rho,\theta)=
-\frac{2\mu}{\rho^3}-\frac{1-\mu}
{(\rho^2-2\rho\cos\theta+1)^{\frac{5}{2}}}\Big(2\rho^2-4\rho\cos\theta-1+3\cos^2\theta\Big)-1
$$
We introduce the function $W \in C^\infty(B-\{M\},\mathbb{R})$ which
is given in lunar polar coordinates by the formula
$$W(\rho,\theta)=
-\frac{2\mu}{\rho^3}-\frac{1-\mu}
{(\rho^2-2\rho\cos\theta+1)^{\frac{5}{2}}}\Big(2\rho^2-4\rho\cos\theta-1+3\cos^2\theta\Big).$$
Hence the assertion of the Lemma is equivalent to
$$W(q) \leq 0, \quad \forall\,\,q \in B-\{M\}.$$
We prove this assertion in four Steps.
\\ \\
\textbf{Step~1: } \emph{The distance $d$ from the moon to the first
Lagrange point is related to $\mu$ by the following formula
\begin{equation}
\label{eq:mu_Lagrange_point}
\mu=-\frac{d^5-3d^4+3d^3}{d^4-2d^3-d^2+2d-1}.
\end{equation}
}
The distance $d$ is given as the unique critical point in the
interval $[0,1]$ of the function $u$ introduced in (\ref{fuu}). The
derivative of $u$ at $\rho \in (0,1)$ is
$$u'(\rho)=\frac{\mu}{\rho^2}-\frac{1-\mu}{(1-\rho)^2}+1-\rho-\mu,$$
which gives the relation between $\mu$ and $d$ as claimed.
\\ \\
\textbf{Step~2: } \emph{For $\rho \in (0,1)$ let $W_\rho=W(\rho,
\cdot) \in C^\infty(S^1,\mathbb{R})$. Let $\vartheta \in S^1$ be a
point where $W_\rho$ attains its maximum. Then $\vartheta$ satisfies
$$
\cos\vartheta=\frac{\rho^2-1+\sqrt{-\rho^4+\rho^2+1}}{\rho}.
$$}
We first compute
$$W'_\rho(\theta)=\frac{3(1-\mu)\sin\theta}{(\rho^2-2\rho\cos\theta+1)^{\frac{7}{2}}}\bigg(\rho\cos^2\theta
+2(1-\rho^2)\cos\theta+\rho(2\rho^2-3)\bigg).$$ We see that the
function $W_\rho$ has critical points at $\theta=0$ and
$\theta=\pi$. We claim that both of these critical points are local
minima. We first check this for $\theta=0$. The second derivative of
$W_\rho$ at $\theta=0$ is given by
$$W''_\rho(0)=\frac{6(1-\mu)(\rho^3-\rho^2-\rho+1)}{(1-\rho)^7}
=\frac{6(1-\mu)(\rho+1)}{(1-\rho)^5}>0.
$$
This shows that $\theta=0$ is a local minimum for $W_\rho$.
Similarly, we get for $\theta=\pi$
$$W''_\rho(\pi)=-\frac{6(1-\mu)(\rho^3+\rho^2-\rho-1)}{(1-\rho)^7}
=\frac{6(1-\mu)(1+\rho)^2}{(1-\rho)^6}>0$$ implying that
$\theta=\pi$ is a local minimum as well. Therefore the local maximum
has to satisfy
$$\rho\cos^2\vartheta
+2(1-\rho^2)\cos\vartheta+\rho(2\rho^2-3)=0.$$ In particular,
$$
\cos\vartheta=\frac{\rho^2-1\pm\sqrt{-\rho^4+\rho^2+1}}{\rho}.
$$
Since
$$\frac{\rho^2-1-\sqrt{-\rho^4+\rho^2+1}}{\rho}<-1
$$
the assertion of Step~2 follows.
\\ \\
\textbf{Step~3: }\emph{The function $W$ attains its maximum at the
boundary of $B$.} \\ \\
We argue by contradiction and assume that $(\rho_0,\theta_0)$ is a
point in the interior of $B$ at which $W$ attains its maximum. We
first observe that
\begin{equation}\label{contr1}
0=\frac{\partial W}{\partial \rho}(\rho_0,\theta_0)=
\frac{6\mu}{\rho_0^4}+3(1-\mu)(\cos\theta_0-\rho_0)\frac{3+4\rho_0\cos\theta_0
-2\rho_0^2-5\cos^2\theta_0}{(\rho_0^2-2\rho_0\cos\theta_0+1)^{\frac{7}{2}}}.
\end{equation}
Moreover, at $\theta_0$ the function $W_{\rho_0}$ attains its
maximum, so that we obtain from Step~2 the equality
\begin{equation}\label{contr2}
\cos\theta_0=\frac{\rho_0^2-1+\sqrt{-\rho_0^4+\rho_0^2+1}}{\rho_0}.
\end{equation}
In particular, we have
\begin{equation}\label{contr3}
\cos\theta_0-\rho_0>0.
\end{equation}
Moreover, taking once more advantage of equality (\ref{contr2}), we
obtain
\begin{eqnarray*}
3+4\rho_0\cos\theta_0-2\rho_0^2-5\cos^2\theta_0 &=&
\frac{2\rho_0^4+4\rho_0^2-10 +(10-6\rho_0^2)
\sqrt{-\rho_0^4+\rho_0^2+1}}{\rho_0^2}\\
&=&\frac{(10-6\rho_0^2)^2(-\rho_0^4+\rho_0^2+1)-(2\rho_0^4+4\rho_0^2-10)^2}
{\rho_0^2\Big(10-2\rho_0^4-4\rho_0^2+(10-6\rho_0^2)
\sqrt{-\rho_0^4+\rho_0^2+1}\Big)}\\
&=&\frac{20\big(3-8\rho_0^2+7\rho_0^4-2\rho_0^6\big)}
{10-2\rho_0^4-4\rho_0^2+(10-6\rho_0^2)\sqrt{-\rho_0^4+\rho_0^2+1}}\\
&=&\frac{40(\rho_0-1)^2\big(\frac{3}{2}-\rho_0^2\big)}
{10-2\rho_0^4-4\rho_0^2+(10-6\rho_0^2)\sqrt{-\rho_0^4+\rho_0^2+1}}\\
&>&0.
\end{eqnarray*}
Together with (\ref{contr3}) this contradicts (\ref{contr1}). This
finishes the proof of Step~3.
\\ \\
\textbf{Step~4: }\emph{Proof that $W$ is nonpositive.}
\\ \\
Since $W(q)$ tends to $-\infty$ when $q$ tends to the moon, we
conclude that there exists a point $(\rho_0,\theta_0) \in B$ at
which $W$ attains its maximum. By Step~3 we get that
$$\rho_0=d.$$
By Step~2 we conclude that
$$\cos\theta_0=\frac{d^2-1+\sqrt{-d^4+d^2+1}}{d}.$$
Talking also advantage of Step~1 we obtain
\begin{eqnarray*}
W(\rho_0,\theta_0) &=&\frac{2d^2\big(d^2-3d+3\big)
\big(3-d^2-2\sqrt{-d^4+d^2+1}\big)^{\frac{5}{2}}}
{\big(d^4-2d^3-d^2+2d-1\big)
\big(3-d^2-2\sqrt{-d^4+d^2+1}\big)^{\frac{5}{2}} d^2}\\
& &-\frac{\big(d^5-2d^4+d^3-d^2+2d-1\big)
\big(6-2d^4-(6-2d^2)\sqrt{-d^4+d^2+1}\big)}
{\big(d^4-2d^3-d^2+2d-1\big)
\big(3-d^2-2\sqrt{-d^4+d^2+1}\big)^{\frac{5}{2}} d^2}.
\end{eqnarray*}
The righthand side is now a function which only depends on $d \in
[0,1]$. Plotting this function reveals that it is always negative.
An analytic argument is included in the appendix.

\end{proof}
Armed with the previous three Lemmas we are now in position to prove
the main result of this section.
\\ \\
\textit{Proof of Proposition~\ref{transverse}.}$\;$ In lunar polar
coordinates the Liouville vector field $X$ is given by
$$X=\rho \frac{\partial}{\partial \rho}.$$
By (\ref{hami}) the Hamiltonian in lunar polar coordinates is given
by
$$H(\rho,\theta)=\frac{1}{2}\Big((p_1+\rho \sin \theta)^2+
(p_2-\rho \cos \theta+1-\mu)^2\Big)+U(\rho,\theta).$$ To prove the
proposition we show that
$$X(H)|_{\Sigma^M_c}>0.$$
We first estimate using the Cauchy-Schwarz inequality
\begin{eqnarray*}
X(H)&=&\rho\sin \theta(p_1+\rho \sin \theta)+\rho \cos \theta
(p_2-\rho \cos \theta+1-\mu)+\rho \frac{\partial U}{\partial \rho}
\\
&\geq&\rho \frac{\partial U}{\partial \rho}-\sqrt{\rho^2\cos^2\theta
+\rho^2 \sin^2 \theta}\sqrt{(p_1+\rho \sin \theta)^2+ (p_2-\rho\cos
\theta+1-\mu)^2}\\
&=&\rho \frac{\partial U}{\partial \rho}-\rho\sqrt{2(H-U)}.
\end{eqnarray*}
If we restrict this to the energy hypersurface we obtain
$$X(H)|_{\Sigma^M_c} \geq \rho\Bigg(\frac{\partial{U}}{\partial \rho}
-\sqrt{2(c-U)}\Bigg)\Bigg|_{\Sigma^M_c}.$$ The righthand side is
independent of the momentum variables so that it is sufficient to
check its positivity on the Hill's region. Hence we are left with
showing
\begin{equation}\label{positiv}\Bigg(\frac{\partial{U}}{\partial \rho}
-\sqrt{2(c-U)}\Bigg)\Bigg|_{\mathcal{K}^M_c}>0.
\end{equation}
We pick a point $(\rho,\theta) \in \mathcal{K}^M_c$. By
Corollary~\ref{cortra} we have $\rho < d$. Moreover, by
(\ref{boundary}) we have that $U(d,\theta)>c$. Since $U(\rho,\theta)
\leq c$ by (\ref{hire}) we conclude that there exists $\tau \in
[0,d-\rho)$ such that
$$U(\rho+\tau,\theta)=c.$$
Hence we estimate
\begin{eqnarray*}
\bigg(\frac{\partial U(\rho,\theta)}{\partial
\rho}\bigg)^2&=&\bigg(\frac{\partial U(\rho+\tau,\theta)}{\partial
\rho}\bigg)^2 -\int_0^{\tau}\frac{d}{dt}\bigg(\frac{\partial
U(\rho+t,\theta)}{\partial \rho}\bigg)^2dt\\
&>&-2\int^{\tau}_0 \frac{\partial U(\rho+t,\theta)}{\partial
\rho}\frac{\partial^2 U(\rho+t,\theta)}{\partial \rho^2}dt\\
&\geq& 2\int^{\tau}_0 \frac{\partial U(\rho+t,\theta)}{\partial
\rho}dt \\
&=&2\Big(U(\rho+\tau,\theta)-U(\rho,\theta)\Big)\\
&=&2\Big(c-U(\rho,\theta)\Big).
\end{eqnarray*}
In the second step we have used Lemma~\ref{tra2} and in the third
step we have used again Lemma~\ref{tra2} together with
Lemma~\ref{tra3}. This implies (\ref{positiv}) and hence the
Proposition follows. \hfill $\square$

\begin{Remark}{\rm 
\label{remark:transversality_above_critical_value}
The above proof actually shows more than the claim of the proposition.
For $\epsilon>0$ small enough, the Liouville field is still transverse to level sets with energy $E<H(L_1)+\epsilon$ away from the Lagrange point $L_1$.

To see this, observe first of all that we can remove a ball around the Lagrange point $B_\delta(L_1)$ to divide the level set $H=E$ into a ``moon" and an ``earth" component.
We shall continue to consider the ``moon" component and argue that the ``moon" component is entirely contained in $B_d(M)$.
Indeed, recall that for fixed $\rho$ the function $U_\rho(\theta)$ attains its minimum at $U_\rho(0)$, so the distance to the moon is maximal along the ray $\theta=0$.
That means that, for $\epsilon$ small enough, the maximum distance to the moon $M$ occurs for small $\theta$, which we can assume to be less than $d$ provided that $\epsilon$ and $\delta$ are small enough.

With this observation in mind, we can repeat the argument of the proof of Proposition~\ref{transverse} to see that $X$ is still transverse to the level set $H=E$ for points $x$ that lie in the ``moon" component (so in particular outside $B_\delta(L_1)$).}
\end{Remark}

\section{Regularizing level sets of $H$ and Liouville vector fields}
The goal of this section is to show that we can perform Moser's regularization, and extend the Liouville vector field (\ref{eq:Liouville_field_unregularized}) to the regularization. We shall use the Hamiltonian
\begin{equation}
\label{eq:Hamiltonian_restricted_3body}
H(p,q)=\frac{1}{2} |p|^2 +p_1 q_2-p_2 q_1 -\frac{\mu}{|q-q^0|}-\frac{1-\mu}{|q-q^1|}.
\end{equation}
Here $q^0=( -(1-\mu),0)$ is the position of the moon, and $q^1=(\mu,0)$ the position of the earth in the rotating coordinate system.

Since the only problem in compactness of the components of the level sets corresponding to the earth or moon comes from points $q$ with either $|q-q^0|<\epsilon$ or $|q-q^1|<\epsilon$, we shall only consider that situation.
Without loss of generality, we shall assume that $|q-q^0|<\epsilon$.

\subsection{Transforming the Hamiltonian} Here we follow the notation introduced
in Section \ref{section:reg}.
In order to regularize $\Sigma$, we begin by reparametrizing the flow, i.e.~we put
$$
s=\int \frac{dt}{|q-q^0|}
$$
and introduce a new Hamiltonian by
$$
H=\frac{K}{|q-q^0|}+k.
$$
By examining Hamilton's equations we see that the flow of $K$ at energy level $0$ corresponds to the flow of $H$ at energy level $k$.
The new Hamiltonian is explicitely
$$
K=\frac{1}{2}(|p|^2+1)|q-q^0|+|q-q^0|(p_1q_2 -p_2 q_1) -(k+\frac{1}{2})|q-q^0|-\mu-(1-\mu)\left|\frac{q-q^0}{q-q^1}\right|.
$$
Now use the canonical transformation $p=-x$, $q-q^0=y$.
We get
$$
K=\frac{1}{2}(|x|^2+1)|y| + |y|(x_2 y_1-x_1 y_2)+ |y|(x_2 q^0_1-x_1 q^0_2)-(k+\frac{1}{2})|y| -(1-\mu)\frac{|y|}{|y+q^0-q^1|}-\mu
$$
\begin{Remark}{\rm 
This transformation reverses the roles of what one would usually expect from the position and momentum coordinates; this will become clear in the following.}
\end{Remark}

Perform the inverse of the stereographic projection given by (\ref{str1}), (\ref{str2}) and (\ref{str3}).
In particular, we have
$$
\epsilon>|q-q_0|=|y|=\frac{2|\eta|}{|x|^2+1}=\frac{2|\eta|}{\frac{1+\xi_0}{1-\xi_0}+\frac{1-\xi_0}{1-\xi_0}} =|\eta|(1-\xi_0).
$$
We get the new Hamiltonian
$$
\tilde K(\xi,\eta)=|\eta|\left(1+(1-\xi_0)(\xi_2\eta_1 -\xi_1\eta_2)+(\xi_2 q^0_1-\xi^1q^0_2)-(k+\frac{1}{2})(1-\xi_0) \right.
$$
$$
\left.
-(1-\mu)(1-\xi_0) \frac{1}{|\vec \eta (1-\xi_0)+\vec \xi \eta_0+q^0-q^1|}
\right)
-\mu.
$$
\noindent From this we get the following smooth Hamiltonian on $T^*S^2$ (shifting and squaring the previous Hamiltonian),
\begin{equation}
\label{eq:regularized_Ham_3body}
Q(\xi,\eta)=\frac{1}{2} |\eta|^2 f(\xi,\eta)^2,
\end{equation}
where
$$
f(\xi,\eta)=1+(1-\xi_0)(\xi_2\eta_1-\xi_1 \eta_2 )+(\xi_2q^0_1 -\xi_1 q^0_2) -(k+\frac{1}{2})(1-\xi_0)-\frac{(1-\mu)(1-\xi_0)}{|\vec \eta (1-\xi_0)+\vec \xi \eta_0+q^0-q^1|}
$$
Due to the shifting and squaring we now need to look at $\{Q=\frac{1}{2} \mu^2\}=\{K=0\}$.

\subsection{Level sets ``close" to the moon or the earth are contact}
We shall check that the Liouville vector field $X=\eta \partial_\eta $ is transverse to one connected component of the level set $\{Q=\frac{1}{2} \mu^2\}$ for points $(\xi,\eta)$ with $|\eta|(1-\xi_0) <\epsilon$.
Indeed,
$$
X(Q)=|\eta|^2 f(\xi,\eta)^2+|\eta|^2 f(\xi,\eta) \eta \partial_{\eta} f(\xi,\eta)=
$$
$$
2Q+|\eta|^2 f(\xi,\eta)
\left(
(1-\xi_0)(\xi_2\eta_1-\xi_1 \eta_2 )
-\eta \partial_\eta \frac{(1-\mu)(1-\xi_0)}{|\vec \eta (1-\xi_0)+\vec \xi \eta_0+q^0-q^1|}
\right)
$$
To estimate the last term, let us first make a few observations.
First of all, let us argue that we get an upper bound for $|\eta|$ from the equality $Q=\frac{1}{2} \mu^2$.
Indeed, the term $|f(\xi,\eta)|$ can be estimated as
$$
\left|
1+(1-\xi_0)(\xi_2\eta_1-\xi_1 \eta_2 )+(\xi_2q^0_1 -\xi_1 q^0_2) -(k+\frac{1}{2})(1-\xi_0)
-\frac{(1-\mu)(1-\xi_0)}{|\vec \eta (1-\xi_0)+\vec \xi \eta_0+q^0-q^1|}
\right| \geq
$$
$$
1-(1-\xi_0)|\xi| |\eta| -|\xi||q^0|+(1-\xi_0)(|k|-\frac{1}{2}-\frac{(1-\mu)}{|\vec \eta (1-\xi_0)+\vec \xi \eta_0+q^0-q^1|})
$$
Using $|q^0|=1-\mu$, $|\xi|=1$, and $(1-\xi_0)|\eta|<\epsilon$, we obtain
$$
|f(\xi,\eta)| \geq 1-\epsilon-(1-\mu)+(1-\xi_0)\left( |k|-\frac{1}{2}-\frac{(1-\mu)}{|\vec \eta (1-\xi_0)+\vec \xi \eta_0+q^0-q^1|} \right) \geq \frac{\mu}{2}
$$
In the last step we have argued that
$$
\frac{(1-\mu)}{|\vec \eta (1-\xi_0)+\vec \xi \eta_0+q^0-q^1|}=\frac{1-\mu}{|q-q^1|}\leq 2(1-\mu)
$$
is bounded on a small neighborhood of $q^0$ (we have $|y|=|q-q^0|<\epsilon$).
Indeed, choosing $\epsilon<1/2$ is enough for this.
In that case $|q-q^1|> 1/2$, so we see that the claim holds.
Choose the energy level $k$ appropriately.

As $Q=\frac{1}{2} \mu^2$, we obtain the promised upper bound for $|\eta|$:
$$
\frac{1}{2} \mu^2=Q \geq \frac{1}{2} |\eta|^2 \frac{\mu^2}{4},
$$
so we get
$$
|\eta|\leq 2.
$$
Let us then continue with our earlier computation to check that the level sets are contact. We may write
$$
X(Q)\geq 2Q-|\eta| \big(|\eta| |f(\xi,\eta)|\big) \eta \partial_\eta f(\xi,\eta).
$$
Observe that $|\eta|\leq 2$, $|\eta| |f(\xi,\eta)|=\sqrt{2Q}=\mu$. The remaining term can also be estimated.
$$
|\eta \partial_\eta f(\xi,\eta)|=\left|
(1-\xi_0)(\xi_2\eta_1-\xi_1 \eta_2 )
-\eta \partial_\eta \frac{(1-\mu)(1-\xi_0)}{|\vec \eta (1-\xi_0)+\vec \xi \eta_0+q^0-q^1|}
\right|
$$
$$
\leq
\epsilon+(1-\mu)\epsilon \partial_\eta \frac{1}{|\vec \eta (1-\xi_0)+\vec \xi \eta_0+q^0-q^1|}
$$
The latter term can obviously be bounded by some constant $C$ on a compact set away from the singular point.
As $|y|<\epsilon$, this holds true, so finally,
$$
X(Q)\geq \mu^2-2\mu \epsilon (1+(1-\mu)C).
$$
Now we see that for $\epsilon$ small enough we have $X(Q)>0$ on the connected component of the level set we are interested in.
\begin{Remark}
\label{remark:Liouville_is_natural}{\rm 
It is important to observe that the Liouville vector field $X$ is actually the natural Liouville vector field on the cotangent bundle, i.e.~$\eta\partial_\eta$.}
\end{Remark}

Combining this with Proposition~\ref{transverse} and the computations related to that proposition, we find the following.
\begin{Proposition}
\label{prop:level_sets_regularized}
Let $E<H(L_1)$ and consider the component $\Sigma$ of the level set $\{H=E\}$ corresponding to either $q^0$ or $q^1$.
Then $\Sigma$ can be regularized to form the closed $3$-manifold $\widetilde{\Sigma} \cong \R P^3$.
Furthermore, $\widetilde{\Sigma}$ carries a natural contact structure which is isomorphic to the standard contact structure on $\R P^3$.
Finally, the sublevel set $\{H\leq E\}$ provides a filling of $\Sigma$, which extends to a strong symplectic filling of $\widetilde{\Sigma}$.
The dynamics are preserved in this regularization, i.e.~Reeb orbits in $\Sigma$ correspond to Reeb orbits in $\widetilde{\Sigma}$.
\end{Proposition}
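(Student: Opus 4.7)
My proof plan is to assemble three ingredients already established in the paper. Away from the collision point $q = q^0$ (treating the moon case; the earth case is symmetric), Proposition~\ref{transverse} supplies a Liouville vector field $X = (q-q^0)\partial_q$ transverse to $\Sigma \cap \{|q-q^0| \geq \epsilon\}$. Near the collision, Moser's regularization---the reparametrization $s = \int dt/|q-q^0|$, the canonical transformation $p \mapsto -x$, $q - q^0 \mapsto y$, the inverse stereographic projection (\ref{str1})--(\ref{str2}), and the shift-and-square producing $Q$ in (\ref{eq:regularized_Ham_3body})---compactifies the non-compact piece of $\Sigma$ by attaching a fiber over the point $\xi = (1,0,0)$, and the explicit estimate $X(Q) \geq \mu^2 - 2\mu\epsilon(1+(1-\mu)C)$ derived just above shows that the natural cotangent Liouville vector field $\eta\partial_\eta$ is transverse to the regularized level set $\widetilde\Sigma = \{Q = \tfrac12\mu^2\}$ in a neighborhood of the added fiber. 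The crucial matching statement is Remark~\ref{remark:Liouville_is_natural}: under the Moser symplectomorphism, $X$ pulls back to exactly $\eta\partial_\eta$, so the two transversality results patch on the overlap into a single Liouville vector field transverse to all of $\widetilde\Sigma$ inside $T^*S^2$.

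With global transversality in hand, the rest is essentially a chain of citations. The topological identification $\widetilde\Sigma \cong \mathbb{R}P^3$ follows from the Kepler model of Section~\ref{section:reg}: the regularized bounded Hill component is a starshaped hypersurface enclosing the zero section of $T^*S^2$, hence diffeomorphic to the unit cotangent bundle $ST^*S^2 \cong \mathbb{R}P^3$. The rotating term and the far-away primary only alter the defining function $Q$ by terms that preserve this starshaped structure, provided $E < H(L_1)$ keeps $\Sigma$ contained in the ball $B = \{|q - q^0| \leq d\}$ guaranteed by the Hill's-region analysis of Section~\ref{section:below} (Corollary~\ref{cortra}). The compact sublevel set $\{Q \leq \tfrac12\mu^2\}$ is then a strong symplectic filling of $(\widetilde\Sigma, \ker(\iota_X\omega))$, and Theorems~\ref{thm:fillable_tight} and~\ref{thm:unique_tight_structure_RP3} (Eliashberg--Gromov fillability and Eliashberg's uniqueness of tight structures on $\mathbb{R}P^3$) force the induced contact structure to be the standard tight one. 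The preservation of dynamics is built into the construction: the time change $ds = dt/|q-q^0|$ is a bijection between $H$-orbits on $\{H=E\}$ and $K$-orbits on $\{K=0\}$, the shift-and-square to $Q$ preserves the characteristic foliation on the common level set, and collision orbits are smoothly completed through the added fiber.

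The main technical obstacle is the patching of the two Liouville vector fields in the first paragraph: one must verify that the identification of $X$ with $\eta\partial_\eta$ holds pointwise on the overlap region (not just as an abstract Liouville condition up to homotopy) and that the induced primitive $\lambda = \iota_X\omega$ extends smoothly across the compactifying fiber of collisions. Both points ultimately rest on Moser's observation that his regularization is a genuine symplectomorphism near the collision sphere, so there is no new analysis required beyond tracking the transformations (\ref{str1})--(\ref{str3}) carefully. Once this is done, the proposition follows, and the resulting filling is the unit co-disk bundle $T^*_{\leq 1}S^2$ foreshadowed in the introduction.
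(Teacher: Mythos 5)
Your proposal is correct and follows essentially the same route as the paper: patch the Liouville field $(q-q^0)\partial_q$ of Proposition~\ref{transverse} with $\eta\partial_\eta$ on the regularization via the observation of Remark~\ref{remark:Liouville_is_natural} that both are the natural Liouville vector field $i_X\omega_{can}=\lambda_{can}$ of the cotangent bundle, identify the filling with $T^*_{\leq 1}S^2$ to get $\widetilde\Sigma\cong\R P^3$, and then invoke Eliashberg--Gromov tightness together with the uniqueness of the tight contact structure on $\R P^3$. The technical point you flag about the pointwise matching of the two Liouville fields is resolved exactly as you suggest, and is the entire content of the paper's first paragraph of proof.
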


\begin{proof}
In order to combine Proposition~\ref{transverse} with the above computations, note that the Liouville vector field on the regularization and the one used in Proposition~\ref{transverse} are both the natural Liouville vector fields on the cotangent bundle, i.e.~both defined by the equation $i_X \omega_{can}=\lambda_{can}$.
This means in particular that they must coincide after the coordinate change.
Hence we obtain a global Liouville vector field for the regularized planar circular restricted three body problem.
This shows that the regularized level set $\widetilde{\Sigma}$ is contact, and diffeomorphic to $\R \P^3$, since the complete regularized level set is filled by the set $\{ Q\leq \frac{1}{2}\mu^2 \}$ which can be identified with $T^*_{\leq 1}S^2$.

Therefore we only need to check that we obtain the standard contact structure on $\widetilde{\Sigma}\cong \R P^3\cong ST^*S^2$.
This can be done directly, but in dimension $3$ we can alternatively argue as follows.
First of all, $\widetilde{\Sigma}$ is fillable, so it is tight by the Eliashberg-Gromov tightness theorem, see Theorem~\ref{thm:fillable_tight}.
On the other hand, we also know that $\R P^3$ has a unique tight contact structure by Theorem~\ref{thm:unique_tight_structure_RP3}.
Hence the contact structure on $\widetilde{\Sigma}$ induced by the Liouville field $X$ is isomorphic to the standard contact structure on $\R P^3$.
\end{proof}

\section{Connected sum}

\subsection{Interpolating Liouville vector fields}
Let us start by describing the setup. Write $q^0=\big(-(1-\mu,0)\big)$ and
$q^1=(\mu,0)$ for the location of the moon and earth, respectively.
Consider the Hamiltonian
$$
H(q,p):=\frac{1}{2}|p+Jq|^2+V_{\mathrm{eff}}(q),
$$
where
$$
J=\begin{pmatrix}0 &1\\-1&0 \end{pmatrix}
$$
and
$$
V_{\mathrm{eff}}=-\frac{1}{2}q^2-\frac{\mu}{|q-q^0|}-\frac{1-\mu}{|q-q^1|}.
$$
Following computations of Conley, \cite{conley}, we can expand the effective
potential around the Lagrange points $q^L$, which are the critical
points of $V_{\mathrm{eff}}$. We shall write
$$
V_{\mathrm{eff}}(q)=\widetilde{Q}(q-q^L)+R(q-q^L),
$$
where $\widetilde{Q}$ is a quadratic form in $q-q^L$ given by
$$
\widetilde{Q}=\frac12 \left(
\begin{array}{cc}
-2 \rho&0\\
0&\rho
\end{array}
\right) ,
$$
and $R(q-q^L)$ is a rest term of higher order than $(q-q^L)^2$.

We now expand the Hamiltonian $H(q,p)$ around the critical point
$(q,p)=(q^L,-Jq^L)$. We obtain
$$
H(q,p)=\frac{1}{2}|p+Jq|^2+\tilde
Q(q-q^L)+R(q-q^L)=\frac{1}{2}|(p+Jq^L)+J(q-q^L)|^2+\tilde
Q(q-q^L)+R(q-q^L),
$$
which we shall rewrite as
$$
H(q,p)=Q(q-q^L,p+Jq^L)+R(q-q^L),
$$
using the quadratic form $Q$ given by
$$
Q=\frac12\left(
\begin{array}{cccc}
-2 \rho&0&0&1\\
0&\rho&-1&0\\
0&-1&1&0\\
1&0&0&1
\end{array}
\right) .
$$
Hence we consider the Hamiltonian
$$
Q(q,p)+R(q),
$$
for $q,p$ near $0$. We have the Weinstein-like vector field
$$
Y_{a,b}= (q_1,q_2,p_1,p_2) \left(
\begin{array}{cccc}
a & 0 & 0 & 0 \\
0 & b & 0 & 0 \\
0 & 0 & 1-a & 0 \\
0 & 0 & 0 & 1-b
\end{array}
\right) \left(
\begin{array}{c}
\partial_{q_1} \\
\partial_{q_2} \\
\partial_{p_1} \\
\partial_{p_2}
\end{array}
\right) .
$$
Then we have the following lemma.
\begin{Lemma}
The vector field $Y_{a,b}$ is Liouville. Furthermore, for a sufficiently
small neighborhood of $(q^L,-Jq^L)$ there exists $\epsilon>0$, $a<0$ and $b>0$ such
that for $E\in[H(L_1)-\epsilon,H(L_1)+\epsilon]-\{ H(L_1) \}$ the vector field
$Y:=Y_{a,b}$ is transverse to the level set $H=E$.
\end{Lemma}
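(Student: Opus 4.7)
The first assertion that $Y_{a,b}$ is Liouville for every $a,b$ is a direct algebraic check. With the standard symplectic form $\omega_0 = dp_1 \wedge dq_1 + dp_2 \wedge dq_2$,
$$\iota_{Y_{a,b}}\omega_0 = (1-a)p_1\,dq_1 - aq_1\,dp_1 + (1-b)p_2\,dq_2 - bq_2\,dp_2,$$
and the exterior derivative recovers $\omega_0$ because the weight-pair $(a,1-a)$ on $(q_i,p_i)$ sums to $1$ for each $i$. The flexibility in the weights $a,b$ is precisely what will let us accommodate the indefiniteness of the Hessian of $H$.

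For transversality, the plan is to arrange $Y_{a,b}(H)>0$ on a punctured neighborhood of the critical point $(q^L,-Jq^L)$, and hence transverse to every nearby non-critical level set. Writing $v=(q-q^L,\,p+Jq^L)$, one has $H = H(L_1) + Q(v) + R(q-q^L)$ with $R = O(|v|^3)$, and linearity of $Y_{a,b}$ gives
$$Y_{a,b}(H) \;=\; Y_{a,b}(Q) \;+\; O(|v|^3).$$
If $Y_{a,b}(Q)$ is positive-definite as a quadratic form on $\R^4$, then $Y_{a,b}(Q) \geq c|v|^2$ and dominates the cubic error on a sufficiently small neighborhood; thus $Y_{a,b}(H)>0$ there except at the single point $v=0$. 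That critical point lies only on the level $H = H(L_1)$, so for $E \in [H(L_1)-\epsilon,H(L_1)+\epsilon]\setminus\{H(L_1)\}$ with $\epsilon$ small, the intersection of $\{H=E\}$ with the neighborhood avoids it, yielding transversality.

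The heart of the argument is to choose $a,b$ so that $Y_{a,b}(Q)$ is positive definite. A direct computation using the matrix form of $Q$ yields
$$Y_{a,b}(Q) = -2\rho a\,q_1^2 + \rho b\,q_2^2 + (1-a)p_1^2 + (1-b)p_2^2 + (1+a-b)\,q_1 p_2 - (1-a+b)\,q_2 p_1,$$
and the key observation is that this form decouples into two $2 \times 2$ blocks on the $(q_1,p_2)$- and $(q_2,p_1)$-planes. Setting $b = 1+a$ with $a \in (-1,0)$ kills the $q_1 p_2$ coefficient, so the first block becomes $-2\rho a\,q_1^2 - a\,p_2^2$, manifestly positive definite. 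The second block has matrix $\begin{pmatrix} \rho(1+a) & -1 \\ -1 & 1-a \end{pmatrix}$ and determinant $\rho(1-a^2) - 1$, which is positive exactly when $a^2 < (\rho-1)/\rho$.

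Finally one needs $\rho>1$ at $L_1$. Expanding the Hessian of the gravitational part of $V_{\mathrm{eff}}$ at the collinear Lagrange point (the centrifugal term $-\tfrac12|q|^2$ is absorbed into the kinetic block) yields $\rho = \mu/r_1^3 + (1-\mu)/r_2^3$, where $r_1,r_2$ are the distances from $L_1$ to the moon and earth. Since $L_1$ lies strictly between the two primaries at unit separation, $r_1,r_2 < 1$ and hence $\rho > \mu + (1-\mu) = 1$. Any $a \in \bigl(-\sqrt{(\rho-1)/\rho},\,0\bigr)$ together with $b = 1+a$ then gives the desired Liouville field. The main potential obstacle is the appearance of the cross terms $q_1 p_2$ and $q_2 p_1$ in $Y_{a,b}(Q)$, which a priori could prevent positive definiteness; the block decoupling together with the inequality $\rho>1$ at $L_1$ resolve it.
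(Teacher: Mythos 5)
Your proof is correct, and it follows the same skeleton as the paper's: check the Liouville identity directly, reduce transversality near $(q^L,-Jq^L)$ to positive definiteness of the quadratic form $Y_{a,b}(Q)$, and absorb the remainder $Y_{a,b}(R)=O(|v|^3)$ by the resulting bound $Y_{a,b}(Q)\geq c|v|^2$ on a small enough neighborhood. Your expansion of $Y_{a,b}(Q)$ from the matrix of $Q$ checks out, including the cross-term coefficients $(1+a-b)$ and $-(1-a+b)$.

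Where you genuinely diverge is in the verification of positive definiteness, and your route is cleaner. The paper fixes $a=-1$, $b=1/2$ and then needs a quantitative lower bound on $\rho$ (it invokes $\rho\geq 4$, derived from the relation \eqref{eq:mu_Lagrange_point} between $\mu$ and the distance $d$ to the Lagrange point; with those values of $a,b$ the binding constraint is in fact $\rho>25/16$, coming from the determinant of the $(q_2,p_1)$ block). You instead exploit the one-parameter freedom: setting $b=1+a$ kills the $q_1p_2$ cross term entirely, the $(q_1,p_2)$ block becomes $-a(2\rho\,q_1^2+p_2^2)$, and the $(q_2,p_1)$ block is positive definite precisely when $a^2<(\rho-1)/\rho$. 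This reduces the needed input to $\rho>1$, which you obtain in one line from $\rho=\mu/r_1^3+(1-\mu)/r_2^3$ with $r_1,r_2<1$, avoiding the quintic relation for $d$ altogether. Both arguments are valid; yours buys a more transparent and more robust inequality (it would survive in situations where one only knows $\rho>1$), at the cost of tying $b$ to $a$, which is harmless since the lemma only requires some $a<0$, $b>0$ — and indeed $b=1+a\in(0,1)$ for $a\in(-1,0)$.
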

\begin{proof}
The Liouville property can be directly checked.
Furthermore, note that $Y(Q)$ is a quadratic form in $(q,p)$.
For appropriate values of $a$ and $b$ (for instance $a=-1$ and $b=1/2$), we can directly check that $Y(Q)$ is positive definite.
In order to see this, we may use the following expression, which is taken from Conley \cite{conley}  right below equation (4) therein,
\begin{equation}
\label{eq:rho_Hessian_linearized_kepler_problem}
\rho=\frac{\mu}{|q-q^0|^3}+\frac{1-\mu}{|q-q^1|^3}.
\end{equation}
Using equation \eqref{eq:mu_Lagrange_point}, we can easily check that $\rho\geq 4$.
After that, one can verify that the eigenvalues of the quadratic form $Y(Q)$ are positive.

On the other hand, $Y(R)$ is
still of order $q^2$, so for a sufficiently small neighborhood of
$(q^L,-Jq^L)$ we can estimate this quantity by a multiple of $Y(Q)$,
$$
Y(Q+R)=Y(Q)+Y(R) \geq Y(Q)-1/2|Y(Q)|>0.
$$
\end{proof}
The contact form induced by $Y$ is given by
$$
\alpha_1=-a(q_1-q_1^L)dp_1-bq_2 dp_2+(1-a)p_1 dq_1+
(1-b)(p_2-q_1^L)dq_2.
$$
On the other hand, by Remark~\ref{remark:transversality_above_critical_value} the vector field $X=(q-q^1)\partial_q$ is
Liouville and transverse to $\{ (q,p) | H(q,p)=E \}$, for $(q,p)$
away from the critical point $L_1$. 

From $X$ we obtain the induced 1-form
$$
\alpha_0=(q_1^1-q_1)dp_1-q_2dp_2.
$$
Note that 
$$
\alpha_1-\alpha_0=(1-a)\left( (q_1-q_1^L)dp_1+p_1dq_1\right)
+(q_1^L-q_1^1)dp_1+(1-b)\left( q_2 dp_2+(p_2-q_1^L)dq_2 \right) .
$$
is exact with primitive
$$
G:=(1-a)(q_1-q_1^L)p_1+(q_1^L-q_1^1)p_1+(1-b)(p_2-q_1^L)q_2.
$$
From now on we use the more convenient to use coordinates around the critical
point $(q^L,-Jq^L)$, i.e.~$(q',p')=(q-q^L,p+Jq^L)$. With some abuse of notation
(leaving out $'$), we obtain
$$
G=(1-a)q_1p_1+(q_1^L-q_1^1)p_1+(1-b)p_2q_2.
$$
If we denote by $X_i$, $i=0,1$, the Liouville vector field corresponding to the 1-form $\alpha_i$ and define the Hamiltonian vector field $X_G$ of $G$ by
$$
i_{X_G}\omega=dG,
$$
then we have
$$
X_1=X_0+X_{G}.
$$
Next we choose a cut-off function $f$ depending on $q_1-\frac{1}{\rho}p_2$ to
interpolate between $X_0$ for $q_1$ large and $X_1$ for $q_1$ close
to $0$ and make the following ansatz
$$
X=X_0+X_{f G}.
$$

\begin{Remark}
\label{remark:cutoff_divides_level_set}\rm
The dependence of $f$ on the specific linear combination $q_1-\frac{1}{\rho} p_2$ is chosen to cancel out some terms in a later computation.
As we explain next the set $\{q_1-\frac{1}{\rho} p_2=0\}$ divides the singular energy hypersurface $\{H=H(L_1)\}$ into two connected components.
We shall call the component containing $q^0$ in its closure the moon component and the component containing $q^1$ the earth component.

Indeed, this can be seen as follows.
First of all, the singular energy hypersurface $\{H=H(L_1)\}$ corresponds to $\{Q=0\}$.
We now claim that the hyperplane $\{q_1-\frac{1}{\rho} p_2=\delta\}$ intersects the quadric $\{Q=0\}$ in a $2$-sphere that collapses to a point for $\delta=0$.
Indeed, inserting the equation $q_1-\frac{1}{\rho}p_2=\delta$ into $Q=0$ yields
$$
\frac{1}{2}\left(
(p_1-q_2)^2+((\rho+1)q_1-\rho \delta)^2+(\rho-1)q_2^2-(2\rho+1)q_1^2
\right) =0,
$$
which can simplified to an equation describing a sphere,
$$
(p_1-q_2)^2+(\rho q_1-(\rho+1)\delta)^2+
(\rho-1)q_2^2
=(2\rho+1)\delta^2.
$$
\end{Remark}

Let us now check that the Liouville field $X$ is transverse to level sets of $H$ close to the Lagrange point.
By choosing a cut-off function $f$ taking values between $0$ and $1$ and vanishing outside a small neighborhood of the Lagrange point $q^L$, we see that $X$ coincides with $X_0$ far away from $q^L$.
We checked earlier that then $X_0(H)>0$ away from the Lagrange point, see Remark~\ref{remark:transversality_above_critical_value}.
Recall also the definition of the Poisson bracket,
$$
\{ F,G \}:=\omega(X_F,X_G)=dF(X_G).
$$
Then we see
\begin{eqnarray*}
X(H)&=&X_0(H)+X_{fG}(H)=dH(X_0)+\{ H,fG \}=dH(X_0)-\{fG,H\}\\
&=& (1-f)dH(X_0)+fdH(X_0)-f\{G,H\}-G\{f,H \} \\
&=& (1-f)dH(X_0)+fdH(X_0+X_G)+GdH(X_f).
\end{eqnarray*}
Note that $X_0+X_G=Y$, so the first two  terms are non-negative near the Lagrange point.
For the last term, note that the Hamiltonian vector field of $f$ is given by
$$
X_f=f'\cdot\Big(\frac{\partial}{\partial p_1}+\frac{1}{\rho}\frac{\partial}{\partial q_2}\Big).
$$
We directly compute $dH(X_f)$. We obtain
$$
dH(X_f)=f'\cdot\left
( \Big(1-\frac{1}{\rho}\Big)p_1+\frac{q_2}{\rho}
\Big(\frac{\mu}{|q-q^1|^3}+\frac{1-\mu}{|q-q^1|^3}-\rho\Big)
\right)
.
$$
Observe that the latter term vanishes in $q^L$ by the equation \eqref{eq:rho_Hessian_linearized_kepler_problem} for $\rho$.
As a result, we see that $dH(X_f)\sim f'(1-\frac{1}{\rho})p_1$, up to higher order terms.

Now we specify the cut-off function.
Choose small constants $0<\epsilon_1<\epsilon_2$.
Choose a smooth function $f$ such that $f=1$ on $[0,\epsilon_1]$ and $f=0$ on $[\epsilon_2,\infty)$, and with non-positive derivative in between.
For later purposes we extend $f$ symmetrically round $0$.

Then observe that the leading order term in $G$ is $(q_1^L-q_1^1)p_1$ and the coefficient $q_1^L-q_1^1$  is negative.
Hence the product $GdH(X_f)$ has a leading order term equal to
$$
f'\cdot(q_1^L-q_1^1)\Big(1-\frac{1}{\rho}\Big)p_1^2,
$$
which is non-negative. In particular we see that the leading order terms in $X(H)$ are all non-negative and at least one term is positive. Thus, they dominate the higher order terms.
Hence we see that $X$ is a Liouville vector field that is everywhere transverse to a component of the level set $H=H(L_1)+\epsilon$ provided that $\epsilon$ is small enough.

We have now all ingredients to prove the following lemma.
\begin{Lemma}
There exists $\epsilon>0$ and a Liouville vector field $\widetilde{X}$ such that $\widetilde{X}$ is transverse to level sets $H=E$ for all $E_L<E<E_L+\epsilon$. 
\end{Lemma}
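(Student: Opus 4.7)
The plan is to show that the vector field $\widetilde X := X_0 + X_{fG}$ constructed in the preceding paragraphs satisfies the lemma. The key observation is that essentially all of the analytic work has already been carried out above: the formula
\[
\widetilde X(H) \;=\; (1-f)\,X_0(H) \;+\; f\,Y(H) \;+\; G\,dH(X_f)
\]
is in hand, and each summand has been separately analyzed, with the leading order of the third summand computed to be $f'(q_1^L-q_1^1)(1-1/\rho)p_1^2$, which is non-negative thanks to the choice of $f$. Hence the proof amounts to packaging these estimates and checking that they hold uniformly in the energy $E$ across the interval $(E_L, E_L+\epsilon)$.

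First I would check that $\widetilde X$ is Liouville: $X_0$ is Liouville by construction and $X_{fG}$ is the Hamiltonian vector field of the smooth function $fG$, so $\mathcal L_{X_{fG}}\omega = 0$ and therefore $\mathcal L_{\widetilde X}\omega = \mathcal L_{X_0}\omega = \omega$.

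Next I would combine the transversality inputs. By Remark~\ref{remark:transversality_above_critical_value}, there exist $\epsilon_0>0$ and a neighborhood $V$ of $L_1$ such that $X_0(H) > 0$ on the relevant component of $\{H=E\}\setminus V$ for every $E \in (E_L-\epsilon_0, E_L+\epsilon_0)$. By the preceding lemma, the Weinstein-like field $Y = X_0 + X_G$ satisfies $Y(H) > 0$ on a fixed neighborhood $U$ of $L_1$ for every such $E \neq E_L$. Choosing the cutoff parameters $\epsilon_1 < \epsilon_2$ small enough that $\mathrm{supp}(f)\subset U$ and such that $\{f \equiv 1\} \cup V^c$ covers the relevant component of the level set, I would verify point by point on $\{H=E\}$: if $x \notin \mathrm{supp}(f)$ then $\widetilde X(H)(x) = X_0(H)(x)>0$; if $x$ lies in $\{f\equiv 1\}$ then $df(x)=0$ and $\widetilde X(H)(x) = Y(H)(x)>0$; and in the transition region where $0<f(x)<1$, the three summands of $\widetilde X(H)$ are non-negative and at least one of $X_0(H)(x), Y(H)(x)$ is strictly positive, so their convex combination is again strictly positive.

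The main obstacle I foresee is matching the sign of the $G\,dH(X_f)$ contribution across the cutoff region and ensuring that the two transversality conditions cover the entire level set uniformly in $E$. Both of these rely on the careful choice of $f$ as a function of the specific linear combination $q_1 - p_2/\rho$, which divides the singular level set into the moon and earth components as recorded in Remark~\ref{remark:cutoff_divides_level_set}, and on shrinking $\epsilon$ so that the portion of $\{H=E\}$ meeting the cutoff region stays inside the neighborhood where $Y$ is transverse. Once these choices are made consistently, the pointwise estimate above is uniform in $E$ and yields the desired transversality on the whole open interval $(E_L, E_L+\epsilon)$.
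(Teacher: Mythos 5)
There is a genuine gap. Your proof works with a single vector field $\widetilde X = X_0 + X_{fG}$, where $X_0=(q-q^1)\partial_q$ is the Liouville field centred at the \emph{earth}, and you only ever verify positivity of $\widetilde X(H)$ on ``the relevant component'' of the level set. That is essentially a repackaging of the paragraphs immediately preceding the lemma, which already conclude that this $X$ is transverse to \emph{a component} (the earth component) of $\{H=E\}$. But for $E>E_L$ the bounded part of the level set is connected and contains both the earth and the moon regions, and the earth-centred field is \emph{not} transverse near the moon: applying $(q-q^1)\partial_q$ to the term $-\mu/|q-q^0|$ of $H$ produces $\mu\,(q-q^1)\cdot(q-q^0)/|q-q^0|^3$, which blows up with either sign as $q\to q^0$, since $q-q^1\approx q^0-q^1$ is a fixed nonzero vector there. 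So the input from Remark~\ref{remark:transversality_above_critical_value} (which concerns the \emph{moon}-centred field $(q-M)\partial_q$) cannot be invoked for your $X_0$ on the moon side, and your pointwise case analysis does not cover the moon component at all.

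The missing idea — which is the actual content of the paper's proof — is a gluing of two different local constructions. One repeats the interpolation on the moon side, starting from the moon-centred Liouville field $(q-q^0)\partial_q$ with its own primitive $G$, and observes that on the neighbourhood of $L_1$ where $f\equiv 1$ both the earth-side and the moon-side interpolated fields coincide with $X_1=Y$. Since the hypersurface $\{q_1-\tfrac{1}{\rho}p_2=0\}$ separates $\{H=E\}$ into the earth and moon pieces (Remark~\ref{remark:cutoff_divides_level_set}) and the transition regions of both cutoffs sit inside the ball where $Y$ is transverse, the two locally defined Liouville fields patch to a single globally defined $\widetilde X$ transverse to the entire level set. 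A secondary, more minor point: in the transition region the summand $G\,dH(X_f)$ is only non-negative to leading order, so one must argue (as the paper does) that the non-negative leading terms, at least one of which is strictly positive, dominate the higher-order corrections; stating flatly that ``the three summands are non-negative'' overstates what has been shown.
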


\begin{proof}
As observed in Remark~\ref{remark:cutoff_divides_level_set} the linear functional $q_1-\frac{1}{\rho}p_2$, on which $f$ depends, divides the level set $\{H=E\}$ into a moon and earth component.

In the above construction we already constructed a Liouville vector that is transverse to the earth component.
Since the constructed vector field is equal to $X_1$ on a small neighborhood of the Lagrange point, where $f=1$, the same construction on the moon side will coincide on this ball with the Liouville vector field for the earth component, so we end up with a globally defined Liouville vector field, that is transverse to the entire level set $\{H=E\}$.
\end{proof}

\subsection{Contact structures on different level sets of $H$}
In the previous section we have shown that all level sets of $H$ for energy less than $H(L_1)+\epsilon$ are of restricted contact type. 
On the other hand, Proposition~\ref{prop:level_sets_regularized} shows that level sets of $H$ with energy less than $H(L_1)$ can be regularized to form a closed manifold, namely $\R P^3$ with its natural contact structure, while preserving the dynamics.
We pointed out in Remark~\ref{remark:Liouville_is_natural} that the Liouville vector field used for the regularization is the natural Liouville vector field for the cotangent bundle, so it automatically coincides with the Liouville vector field $(q-q^0)\partial_{q}$ after a suitable coordinate change.

Now fix an energy level $H(L_1)<E<H(L_1)+\epsilon$.
The above observation that the Liouville field is natural applies to both the earth and moon component of the level set $H=E$.
Hence we can regularize the level set $\Sigma=\{ H=E \}$ both near $q^0$ and near $q^1$ in order to obtain a closed manifold $\widetilde{\Sigma}$.

We see directly that topologically $\widetilde{\Sigma}\cong \widetilde{\Sigma}_1 \# \widetilde{\Sigma}_2$ by observing that $\{ H(L_1)-\epsilon \leq H \leq E \}$ provides a cobordism $W$ between $\Sigma_1 \cup \Sigma_2$ and $\Sigma$.
This cobordism admits $H$ as Morse function with only one index $1$ critical point, namely $L_1$. Thus, we can regard $\Sigma$ and hence $\widetilde{\Sigma}$ as a connected sum.

\begin{Remark}\label{rmk:bdry_connected_sum}\rm
We observe that $\widetilde{\Sigma}$ is symplectically fillable. The above cobordism argument shows that, topologically, the filling is given by the boundary connected sum $T^*_{\leq 1}S^2 \natural T^*_{\leq 1}S^2$. The set $\{ H(L_1)-\epsilon \leq H \leq E \}\cap B_\delta(L_1)$ is a symplectic handle corresponding to a $1$-handle attachment as described in Section~\ref{sec:symplectic_handles}. A small isotopy between our setup, which is very close to the Weinstein model, and the actual Weinstein model induces the isotopy required to show that as contact manifolds
$$
\widetilde{\Sigma}\cong \widetilde{\Sigma}_1 \# \widetilde{\Sigma}_2.
$$
Alternatively, we can reason that, since $\widetilde{\Sigma}$ is fillable, it is tight.
Now apply Theorem~\ref{thm:unique_prime_decomposition}, which asserts that tight contact manifolds have a unique prime decomposition.
This immediately implies that $\widetilde{\Sigma} \cong (\R P^3,\xi_1)\# (\R P^3,\xi_2)$, where $\xi_1$ and $\xi_2$ are tight contact structures on $\R P^3$.
Now we can simply use the fact that there is a unique tight contact structure on $\R P^3$ up to isotopy just as in Proposition~\ref{prop:level_sets_regularized} to complete the proof of Theorem~A.
\end{Remark}


\appendix
\section{Computations}

Here are some detailed computations omitted in the main text of Section \ref{section:below}.

$$U(\rho,\theta)=-\frac{\mu}{\rho}-\frac{1-\mu}{\sqrt{\rho^2-2\rho \cos
\theta+1}}-\frac{1}{2}\rho^2+\rho\cos\theta(1-\mu)-\frac{1}{2}(1-\mu)^2.$$
$$\frac{\partial U}{\partial \theta}=(1-\mu)\rho\sin\theta
\bigg(\frac{1}{\big(\rho^2-2\rho\cos\theta+1\big)^{\frac{3}{2}}}-1\bigg).$$
\begin{eqnarray*}
\frac{\partial U}{\partial
\rho}=\frac{\mu}{\rho^2}+(1-\mu)\frac{\rho-\cos
\theta}{(\rho^2-2\rho\cos\theta+1)^{\frac{3}{2}}}-\rho+\cos
\theta(1-\mu).
\end{eqnarray*}
\begin{eqnarray*}
\frac{\partial^2 U}{\partial \rho \partial \theta} &=& (1-\mu)
\Bigg(\frac{\sin\theta\big(\rho^2-2\rho\cos\theta+1\big)^{\frac{3}{2}}-
\frac{3}{2}(\rho^2-2\rho\cos\theta+1)
^{\frac{1}{2}}2\rho\sin\theta(\rho-\cos\theta)}{\big(\rho^2-2\rho\cos\theta
+1\big)^3}-\sin\theta\Bigg)\\
&=&(1-\mu)\sin\theta\Bigg(\frac{\rho^2-2\rho\cos\theta+1-3\rho(\rho-\cos\theta)}
{\big(\rho^2-2\rho\cos\theta+1\big)^{\frac{5}{2}}}-1\Bigg)\\
&=&(1-\mu)\sin\theta\Bigg(\frac{-2\rho^2+\rho\cos\theta+1}
{\big(\rho^2-2\rho\cos\theta+1\big)^{\frac{5}{2}}}-1\Bigg)
\end{eqnarray*}
\begin{eqnarray*}
\frac{\partial^2 U}{\partial \rho \partial \rho}&=&
-\frac{2\mu}{\rho^3}+(1-\mu)\frac{(\rho^2-2\rho \cos
\theta+1)^{\frac{3}{2}}-\frac{3}{2}(\rho^2-2\rho\cos
\theta+1)^{\frac{1}{2}}(2\rho-2\cos\theta)(\rho-\cos\theta)}
{(\rho^2-2\rho\cos\theta+1)^3}-1\\
&=&-\frac{2\mu}{\rho^3}+(1-\mu)\frac{\rho^2-2\rho\cos\theta+1-3(\rho-\cos\theta)^2}
{(\rho^2-2\rho\cos\theta+1)^{\frac{5}{2}}}-1\\
&=&-\frac{2\mu}{\rho^3}+\frac{1-\mu}
{(\rho^2-2\rho\cos\theta+1)^{\frac{5}{2}}}\Big(\rho^2-2\rho\cos\theta+1-3\rho^2
+6\rho\cos\theta-3\cos^2\theta\Big)-1\\
&=&-\frac{2\mu}{\rho^3}-\frac{1-\mu}
{(\rho^2-2\rho\cos\theta+1)^{\frac{5}{2}}}\Big(2\rho^2-4\rho\cos\theta-1+3\cos^2\theta\Big)-1
\end{eqnarray*}
\begin{eqnarray*}
\frac{\partial^3 U}{\partial \rho^3}&=&\frac{6\mu}{\rho^4}-(1-\mu)
\frac{(4\rho-4\cos \theta)(\rho^2-2\rho\cos
\theta+1)^{\frac{5}{2}}}{(\rho^2-2\rho\cos\theta+1)^5}\\
& &+(1-\mu)\frac{
\frac{5}{2}(\rho^2-2\rho\cos\theta+1)^{\frac{3}{2}}(2\rho-2\cos\theta)
(2\rho^2-4\rho\cos\theta-1+3\cos^2\theta)}{(\rho^2-2\rho\cos\theta+1)^5}\\
&=&\frac{6\mu}{\rho^4}-(1-\mu)(\rho-\cos\theta)
\frac{4(\rho^2-2\rho\cos\theta+1)-5(2\rho^2-4\rho\cos\theta-1+3\cos^2\theta)}
{(\rho^2-2\rho\cos\theta+1)^{\frac{7}{2}}}\\
&=&\frac{6\mu}{\rho^4}-(1-\mu)(\rho-\cos\theta)\frac{-6\rho^2+12\rho\cos\theta
+9-15\cos^2\theta}{(\rho^2-2\rho\cos\theta+1)^{\frac{7}{2}}}\\
&=&\frac{6\mu}{\rho^4}-3(1-\mu)(\rho-\cos\theta)\frac{3+4\rho\cos\theta
-2\rho^2-5\cos^2\theta}{(\rho^2-2\rho\cos\theta+1)^{\frac{7}{2}}}
\end{eqnarray*}
\begin{eqnarray*}
U(\rho,0)-U(\rho,\pi)&=&
(1-\mu)\bigg(-\frac{1}{1-\rho}+\rho+\frac{1}{1+\rho}+\rho\bigg)\\
&=&(1-\mu)\frac{-1-\rho+1-\rho+2\rho(1-\rho)(1+\rho)}{(1-\rho)(1+\rho)}\\
&=&(1-\mu)\frac{-2\rho+2\rho-2\rho^3}{(1-\rho)(1+\rho)}\\
&=&-\frac{2(1-\mu)\rho^3}{(1-\rho)(1+\rho)}\\
&<&0.
\end{eqnarray*}
\begin{eqnarray*}
\frac{\partial U}{\partial \rho}(\rho,0)-\frac{\partial U}{\partial
\rho}(\rho,\pi)&=&(1-\mu)\Bigg(\frac{\rho-1}{(1-\rho)^3}+1-
\frac{\rho+1}{(1+\rho)^3}+1\Bigg)\\
&=&(1-\mu)\Bigg(2-\frac{1}{(1-\rho)^2}-\frac{1}{(1+\rho)^2}\Bigg)\\
&=&(1-\mu)\frac{2(1-\rho^2)^2-(1+\rho)^2-(1-\rho)^2}{(1-\rho)^2(1+\rho)^2}
\\
&=&(1-\mu)\frac{2-4\rho^2+2\rho^4-1-2\rho-\rho^2-1+2\rho-\rho^2}
{(1-\rho)^2(1+\rho)^2}\\
&=&2(1-\mu)\frac{\rho^4-3\rho^2}{(1-\rho)^2(1+\rho)^2}
\end{eqnarray*}
$$g(\theta)=\frac{2\rho^2-4\rho\cos\theta-1+3\cos^2\theta}
{(\rho^2-2\rho\cos\theta+1)^{\frac{5}{2}}}.$$
$$\kappa(\theta)=\rho^2-2\rho\cos\theta+1.$$
\begin{eqnarray*}
\frac{dg}{d\theta}&=&\frac{(4\rho\sin\theta-6\cos\theta\sin\theta)\kappa^{\frac{5}{2}}
-\frac{5}{2}2\rho\sin\theta\kappa^{\frac{3}{2}}(2\rho^2-4\rho\cos\theta-1+3
\cos^2\theta)}{\kappa^5}\\
&=&\frac{\sin\theta}{\kappa^{\frac{7}{2}}}\bigg((4\rho-6\cos\theta)
(\rho^2-2\rho\cos\theta+1)
-5\rho(2\rho^2-4\rho\cos\theta-1+3\cos^2\theta)\bigg)\\
&=&\frac{\sin\theta}{\kappa^{\frac{7}{2}}}\bigg(4\rho^3-8\rho^2\cos\theta
+4\rho-6\rho^2\cos\theta+12\rho\cos^2\theta-6\cos\theta\\
& &\,\,\,\,\,\,\,\,\,\,\,\,\,\,\,-10\rho^3+
20\rho^2\cos\theta+5\rho-15\rho\cos^2\theta\bigg)\\
&=&\frac{\sin\theta}{\kappa^{\frac{7}{2}}}\bigg(-6\rho^3+6\rho^2\cos\theta
-3\rho\cos^2\theta+9\rho-6\cos\theta\bigg)\\
&=&-\frac{3\sin\theta}{\kappa^{\frac{7}{2}}}\bigg(\rho\cos^2\theta
+2(1-\rho^2)\cos\theta+\rho(2\rho^2-3)\bigg)
\end{eqnarray*}
$$\theta \in M.$$
\begin{eqnarray*}
\cos\theta&=&\frac{2(\rho^2-1)\pm\sqrt{4(1-\rho^2)^2-4\rho^2(2\rho^2-3)}}
{2\rho}\\
&=&\frac{\rho^2-1\pm\sqrt{1-2\rho^2+\rho^4-2\rho^4+3\rho^2}}{\rho}\\
&=&\frac{\rho^2-1\pm\sqrt{-\rho^4+\rho^2+1}}{\rho}\\
&=&\frac{\rho^2-1+\sqrt{-\rho^4+\rho^2+1}}{\rho}
\end{eqnarray*}
\begin{eqnarray*}
g(\theta)&=&\frac{2\rho^2-4\rho^2+4-4\sqrt{-\rho^4+\rho^2+1}-1
+\frac{3}{\rho^2}\Big((\rho^2-1)^2+2(\rho^2-1)\sqrt{-\rho^4+\rho^2+1}-\rho^4
+\rho^2+1\Big)}{\big(\rho^2-2\rho^2+2-2\sqrt{-\rho^4+\rho^2+1}+1\big)^{\frac{5}{2}}}\\
&=&\frac{-2\rho^4+3\rho^2-4\rho^2\sqrt{-\rho^4+\rho^2+1}
+3\rho^4-6\rho^2+3+6(\rho^2-1)\sqrt{-\rho^4+\rho^2+1}-3\rho^4
+3\rho^2+3}{\big(3-\rho^2-2\sqrt{-\rho^4+\rho^2+1}\big)^{\frac{5}{2}}
\rho^2}\\
&=&\frac{6-2\rho^4-(6-2\rho^2)\sqrt{-\rho^4+\rho^2+1}}{\big(3-\rho^2-2\sqrt{-\rho^4+\rho^2+1}\big)^{\frac{5}{2}}
\rho^2}
\end{eqnarray*}

\begin{eqnarray*}
3+4\rho\cos\theta-2\rho^2-5\cos^2\theta
&=&3+4\rho^2-4+4\sqrt{-\rho^4+\rho^2+1}-2\rho^2\\
& &-\frac{5}{\rho^2}
\Bigg(\rho^4-2\rho^2+1+2(\rho^2-1)\sqrt{-\rho^4+\rho^2+1}-\rho^4+\rho^2+1\Bigg)\\
&=&2\rho^2-1+4\sqrt{-\rho^4+\rho^2+1}\\
& &-\frac{5}{\rho^2}
\Bigg(-\rho^2+2+2(\rho^2-1)\sqrt{-\rho^4+\rho^2+1}\Bigg)\\
&=&2\rho^2-1+4\sqrt{-\rho^4+\rho^2+1}+5-\frac{10}{\rho^2}\\
&
&-10\sqrt{-\rho^4+\rho^2+1}+\frac{10}{\rho^2}\sqrt{-\rho^4+\rho^2+1}\\
&=&2\rho^2+4-6\sqrt{-\rho^4+\rho^2+1}-\frac{10}{\rho^2}+\frac{10}{\rho^2}
\sqrt{-\rho^4+\rho^2+1}\\
&=& \frac{2\rho^4+4\rho^2-10 +(10-6\rho^2)
\sqrt{-\rho^4+\rho^2+1}}{\rho^2}\\
&=&\frac{(10-6\rho^2)^2(-\rho^4+\rho^2+1)-(2\rho^4+4\rho^2-10)^2}
{\rho^2\Big(10-2\rho^4-4\rho^2+(10-6\rho^2)\sqrt{-\rho^4+\rho^2+1}\Big)}\\
&=&\frac{(36\rho^4-120\rho^2+100)(-\rho^4+\rho^2+1)}
{\rho^2\Big(10-2\rho^4-4\rho^2+(10-6\rho^2)\sqrt{-\rho^4+\rho^2+1}\Big)}\\
& &-\frac{
(4\rho^8+8\rho^6-20\rho^4+8\rho^6+16\rho^4-40\rho^2-20\rho^4-40\rho^2+100)}
{\rho^2\Big(10-2\rho^4-4\rho^2+(10-6\rho^2)\sqrt{-\rho^4+\rho^2+1}\Big)}\\
&=&\frac{-36\rho^8+120\rho^6-100\rho^4+36\rho^6-120\rho^4+100\rho^2
+36\rho^4-120\rho^2+100}
{\rho^2\Big(10-2\rho^4-4\rho^2+(10-6\rho^2)\sqrt{-\rho^4+\rho^2+1}\Big)}\\
& &+\frac{ -4\rho^8-16\rho^6+24\rho^4+80\rho^2-100}
{\rho^2\Big(10-2\rho^4-4\rho^2+(10-6\rho^2)\sqrt{-\rho^4+\rho^2+1}\Big)}\\
&=&\frac{-40\rho^8+140\rho^6-160\rho^4+60\rho^2}
{\rho^2\Big(10-2\rho^4-4\rho^2+(10-6\rho^2)\sqrt{-\rho^4+\rho^2+1}\Big)}\\
&=&\frac{20\big(3-8\rho^2+7\rho^4-2\rho^6\big)}
{10-2\rho^4-4\rho^2+(10-6\rho^2)\sqrt{-\rho^4+\rho^2+1}}
\end{eqnarray*}

$$U(r,s)=-\frac{\mu}{\sqrt{r^2+s^2}}-\frac{1-\mu}{\sqrt{(1-r)^2+s^2}}
-\frac{1}{2}(r-1+\mu)^2-\frac{1}{2}s^2.$$
$$\frac{\partial U}{\partial
r}=\frac{r\mu}{\big(r^2+s^2\big)^{\frac{3}{2}}}-
\frac{(1-r)(1-\mu)}{\big((1-r)^2+s^2\big)^{\frac{3}{2}}}-(r-1+\mu).$$
$$\frac{\partial U}{\partial
s}=\frac{s\mu}{\big(r^2+s^2\big)^{\frac{3}{2}}}+
\frac{s(1-\mu)}{\big((1-r)^2+s^2\big)^{\frac{3}{2}}}-s.$$
\begin{eqnarray*}
r\frac{\partial U}{\partial r}+s\frac{\partial U}{\partial s}
&=&\frac{(r^2+s^2)\mu}{\big(r^2+s^2\big)^{\frac{3}{2}}}+
\frac{(1-\mu)}{\big((1-r)^2+s^2\big)^{\frac{3}{2}}}\big(r(r-1)+s^2\big)
+r(1-r-\mu)-s^2\\
&=&\frac{\mu}{\big(r^2+s^2\big)^{\frac{1}{2}}}+
\frac{(1-\mu)}{\big((1-r)^2+s^2\big)^{\frac{3}{2}}}\big(r^2+s^2-r\big)
+r(1-r-\mu)-s^2
\end{eqnarray*}
$$(p_1+s)^2+(p_2+1-r-\mu)^2=2(k-U).$$
\begin{eqnarray*}
& &\frac{\mu}{\sqrt{r^2+s^2}}-p_2r+p_1s+
\frac{1-\mu}{\big((1-r)^2+s^2\big)^{\frac{3}{2}}}(r^2+s^2-r)\\
&=&\frac{\mu}{\sqrt{r^2+s^2}}-(p_2+1-r-\mu)r+r(1-r-\mu)
+(p_1+s)s-s^2\\
& &+\frac{1-\mu}{\big((1-r)^2+s^2\big)^{\frac{3}{2}}}(r^2+s^2-r)\\
&\geq&\frac{\mu}{\sqrt{r^2+s^2}}+r(1-r-\mu)
-s^2+\frac{1-\mu}{\big((1-r)^2+s^2\big)^{\frac{3}{2}}}(r^2+s^2-r)\\
& &-\sqrt{r^2+s^2}\sqrt{(p_1+s)^2+(p_2+1-r-\mu)^2}\\
&=&r\frac{\partial U}{\partial r}+s\frac{\partial U}{\partial s}
-\sqrt{r^2+s^2}\sqrt{2(k-U)}.
\end{eqnarray*}

$$\rho^5-(3-\mu)\rho^4+(3-2\mu)\rho^3-\mu\rho^2+2\mu\rho-\mu=0.$$
$$\mu=-\frac{\rho^5-3\rho^4+3\rho^3}{\rho^4-2\rho^3-\rho^2+2\rho-1}.$$
\begin{eqnarray*}
& &-\frac{2\mu}{\rho^3}-(1-\mu)g(\theta)\\
&=&\frac{2\rho^3\big(\rho^2-3\rho+3\big)}
{\big(\rho^4-2\rho^3-\rho^2+2\rho-1\big)\rho^3}\\
& &-
\frac{\big(\rho^4-2\rho^3-\rho^2+2\rho-1+\rho^5-3\rho^4+3\rho^3\big)
\big(6-2\rho^4-(6-2\rho^2)\sqrt{-\rho^4+\rho^2+1\big)}}
{\big(\rho^4-2\rho^3-\rho^2+2\rho-1\big)
\big(3-\rho^2-2\sqrt{-\rho^4+\rho^2+1}\big)^{\frac{5}{2}} \rho^2}
\\
&=&\frac{2\rho^2\big(\rho^2-3\rho+3\big)
\big(3-\rho^2-2\sqrt{-\rho^4+\rho^2+1}\big)^{\frac{5}{2}}}
{\big(\rho^4-2\rho^3-\rho^2+2\rho-1\big)
\big(3-\rho^2-2\sqrt{-\rho^4+\rho^2+1}\big)^{\frac{5}{2}} \rho^2}\\
& &-\frac{\big(\rho^5-2\rho^4+\rho^3-\rho^2+2\rho-1\big)
\big(6-2\rho^4-(6-2\rho^2)\sqrt{-\rho^4+\rho^2+1}\big)}
{\big(\rho^4-2\rho^3-\rho^2+2\rho-1\big)
\big(3-\rho^2-2\sqrt{-\rho^4+\rho^2+1}\big)^{\frac{5}{2}} \rho^2}.
\end{eqnarray*}

\subsection{Proof that $W(\rho_0,\theta_0)$ is negative}
Observe that
$$
d^5-2d^4+d^3-d^2+2d-1=(d^2+d+1)(d-1)^3,
$$
and that
$$
d^4-2d^3-d^2+2d-1=(d-1/2)^4-5/2(d-1/2)^2-7/16.
$$
The latter can be checked to be negative on the interval $[0,1]$ We
now rewrite everything to see that this expression is negative. The
main trick which we shall repeat over and over again, is the simple
identity
$$
a-\sqrt{b}=\frac{a^2-b}{a+\sqrt b}.
$$
For instance, we see that
$$
3-d^2-2\sqrt{1+d^2-d^4}=\frac{(3-d^2)^2-4(1+d^2-d^4)}{3-d^2+2\sqrt{1+d^2-d^4}
}=\frac{5(1-d)^2(1+d)^2}{3-d^2+2\sqrt{1+d^2-d^4}}.
$$
Hence we obtain the following:
\begin{eqnarray*}
&=&\frac{2d^2\big(d^2-3d+3\big)
\big(3-d^2-2\sqrt{-d^4+d^2+1}\big)^{\frac{5}{2}}}
{\big(d^4-2d^3-d^2+2d-1\big)
\big(3-d^2-2\sqrt{-d^4+d^2+1}\big)^{\frac{5}{2}} d^2}\\
& &-\frac{\big(d^5-2d^4+d^3-d^2+2d-1\big)
\big(6-2d^4-(6-2d^2)\sqrt{-d^4+d^2+1}\big)}
{\big(d^4-2d^3-d^2+2d-1\big)
\big(3-d^2-2\sqrt{-d^4+d^2+1}\big)^{\frac{5}{2}} d^2}
\\
&=&\frac{ 2(d^2-3d+3) \left(
\frac{5(1-d)^2(1+d)^2}{3-d^2+2\sqrt{1+d^2-d^4} } \right)^{5/2}
}{\left( (d-1/2)^4-5/2(d-1/2)^2-7/16 \right) \left(
\frac{5(1-d)^2(1+d)^2}{3-d^2+2\sqrt{1+d^2-d^4} } \right)^{5/2}
}\\
& &-\frac{(d^2+d+1)(d-1)^3 \left(
-6\frac{(1-d)(1+d)}{1+\sqrt{1+d^2-d^4}}+2\sqrt{1+d^2-d^4}-2d^2
\right)} {\left( (d-1/2)^4-5/2(d-1/2)^2-7/16 \right) \left(
\frac{5(1-d)^2(1+d)^2}{3-d^2+2\sqrt{1+d^2-d^4} } \right)^{5/2} }
\end{eqnarray*}
Note here that $d\in[0,1]$, so we can pull out $(1-d)^5$ in the
expression above. We obtain
\begin{eqnarray*}
&=&\frac{ 2(d^2-3d+3) (1-d)^5(1+d)^5
\frac{5^{5/2}}{(3-d^2+2\sqrt{1+d^2-d^4})^{5/2} } }{\left(
(d-1/2)^4-5/2(d-1/2)^2-7/16 \right) (1-d)^5(1+d)^5
\frac{5^{5/2}}{(3-d^2+2\sqrt{1+d^2-d^4})^{5/2} }
}\\
&&+\frac{
2(d^2+d+1)(1-d)^3\frac{(1-d)(1+d)}{1+\sqrt{1+d^2-d^4}}(-2+\sqrt{1+d^2-d^4}+d^2)
} {\left( (d-1/2)^4-5/2(d-1/2)^2-7/16 \right) (1-d)^5(1+d)^5
\frac{5^{5/2}}{(3-d^2+2\sqrt{1+d^2-d^4})^{5/2} } }
\\
&=& 2\left( \frac{ (d^2-3d+3) (1-d)^5(1+d)^5
\frac{5^{5/2}}{(3-d^2+2\sqrt{1+d^2-d^4})^{5/2} } }{\left(
(d-1/2)^4-5/2(d-1/2)^2-7/16 \right) (1-d)^5(1+d)^5
\frac{5^{5/2}}{(3-d^2+2\sqrt{1+d^2-d^4})^{5/2} } } \right.
\\
&& \left. +\frac{ \frac{(d^2+d+1)(1-d)^5(1+d)^2 \left(
-1+\frac{d^2}{1+\sqrt{1+d^2-d^4}} \right) } {1+\sqrt{1+d^2-d^4}} }
{\left( (d-1/2)^4-5/2(d-1/2)^2-7/16 \right) (1-d)^5(1+d)^5
\frac{5^{5/2}}{(3-d^2+2\sqrt{1+d^2-d^4})^{5/2} } } \right)
\\
&=& 2 \frac{5^{5/2}(1+d)^3 \frac{d^2-3d+3}
{(3-d^2+2\sqrt{1+d^2-d^4})^{5/2}} +
\frac{(d^2+d+1)d^2}{(1+\sqrt{1+d^2-d^4})^2} - \frac{(d^2+d+1)}
{1+\sqrt{1+d^2-d^4}} } {\left( (d-1/2)^4-5/2(d-1/2)^2-7/16 \right)
(1+d)^3 \frac{5^{5/2}}{(3-d^2+2\sqrt{1+d^2-d^4})^{5/2}} }
\end{eqnarray*}
Now observe that the denominator is negative for $d\in[0,1]$; there
are no zeroes for $d\in[0,1]$ and the function
$(d-1/2)^4-5/2(d-1/2)^2-7/16$ is clearly negative. Furthermore, the
denominator is positive for the following reason. We shall estimate
the sum of the first and last term,
\begin{eqnarray*}
& & \frac{5^{5/2}(1+d)^3(d^2-3d+3)}
{(3-d^2+2\sqrt{1+d^2-d^4})^{5/2}} - \frac{(d^2+d+1)}
{1+\sqrt{1+d^2-d^4}}
\geq  \\
& & (d^2+d+1)\left( \frac{5^{5/2}} {(3-d^2+2\sqrt{1+d^2-d^4})^{5/2}}
- \frac{1} {1+\sqrt{1+d^2-d^4}} \right) .
\end{eqnarray*}
We see that this estimate holds, since for $d\in [0,1]$ we have
$d^2+d+1 \leq (1+d)^3$, and $d^2-3d+3\geq 1$; indeed,
$(d^2-3d+3)'=2d-3$ is negative on $[0,1]$, so $d^2-3d+3$ is minimal
in $d=1$. Finally observe that $\frac{1} {1+\sqrt{1+d^2-d^4}}<1$,
whereas
$$
\frac{5^{5/2}} {(3-d^2+2\sqrt{1+d^2-d^4})^{5/2}}\geq 1.
$$
The latter can be seen by observing that $(3-d^2+2\sqrt{1+d^2-d^4})$
has derivative
$$
-2d \left(1-\frac{1-2d^2}{\sqrt{1+d^2-d^4}} \right) \leq
-2d(\frac{2d^2}{\sqrt{1+d^2-d^4}})\leq 0,
$$
so
$$
\frac{5^{5/2}} {(3-d^2+2\sqrt{1+d^2-d^4})^{5/2}}
$$
is minimal for $d=0$, where its value is 1. This shows the claim.

\end{document}